\newtheorem{theorem}{Theorem}
\newtheorem{proposition}[theorem]{Proposition} \newtheorem{corollary}[theorem]{Corollary}
\newtheorem{lemma}[theorem]{Lemma}
\theoremstyle{definition}
\newtheorem{remark}[theorem]{Remark}
\DeclareMathOperator{\Ad}{Ad}
\DeclareMathOperator{\Aut}{Aut}
\DeclareMathOperator{\Int}{Int}
\DeclareMathOperator{\Id}{Id}
\DeclareMathOperator{\Ker}{Ker}
\DeclareMathOperator{\Sp}{Sp}
\def\botimes{\mathbin{\bar{\otimes}}}
\begin{document}
\title[Continuous cores of type III$_1$ free product factors]{
A characterization of fullness of continuous cores of type III$_1$ free product factors 
}
\author[R.~Tomatsu]{Reiji Tomatsu$\,^{1}$}
\address{
(R.T.) Department of Mathematics, Hokkaido University, Hokkaido, 060-0810, Japan
}
\email{tomatsu@math.sci.hokudai.ac.jp}
\thanks{$\,^{1}$Supported by Grant-in-Aid for Young Scientists (B) 24740095.}
\author[Y.~Ueda]{Yoshimichi Ueda$\,^{2}$}
\address{
(Y.U.) Graduate School of Mathematics, 
Kyushu University, 
Fukuoka, 810-8560, Japan
}
\email{ueda@math.kyushu-u.ac.jp}
\thanks{$\,^{2}$Supported by Grant-in-Aid for Scientific Research (C) 24540214.}
\thanks{AMS subject classification: Primary:\, 46L64;
secondary:\,46L10.}
\thanks{Keywords: Type III factor, Continuous core, Full factor, Free product, Bernoulli crossed product, $\tau$-invariant.}
\date{Dec.~10, 2014}

\begin{abstract} 
We prove that, for any type III$_1$ free product factor, its continuous core is full if and only if its $\tau$-invariant is the usual topology on the real line. This trivially implies, as a particular case, the same result for free Araki--Woods factors. Moreover, our method shows the same result for full (generalized) Bernoulli crossed product factors of type III$_1$.
\end{abstract}

\maketitle

\allowdisplaybreaks{

\section{Introduction} 

Let $M_1, M_2$ be two non-trivial von Neumann algebras with separable preduals and $\varphi_1, \varphi_2$ be faithful normal states on them, respectively. Let $(M,\varphi)=(M_1,\varphi_1)\star(M_2,\varphi_2)$ be their free product (see e.g.~\cite[\S\S2,1]{Ueda:AdvMath11}). Then $M$ must be of the form $M = M_d\oplus M_c$ or $M_c$, where $M_d$ is finite dimensional (which can explicitly be determined) and $M_c$ is diffuse. In what follows, we assume that $(\dim M_1, \dim M_2) \neq (2,2)$; otherwise $M_c = L^\infty[0,1]\botimes M_2(\mathbb{C})$. Then $M_c$ is a full factor of type II$_1$ (if both $\varphi_i$ are tracial), III$_\lambda$ with $0<\lambda<1$ (if the modular actions $\sigma^{\varphi_i}$ have a common (positive) period and the smallest one is $2\pi/|\log\lambda|$), or III$_1$ (otherwise); hence we call $M_c$ a {\it free product factor} in what follows. Moreover, Connes's $\tau$-invariant $\tau(M_c)$ (see \cite{Connes:JFA74}) coincides with the weakest topology on $\mathbb{R}$ making the mapping $t \in \mathbb{R} \mapsto \sigma_t^\varphi \in \Aut(M)$ (equipped with the so-called $u$-topology, see e.g.~\cite[\S III]{Connes:JFA74}) continuous. See \cite[Theorem 4.1]{Ueda:AdvMath11},\cite[Theorem 3.1]{Ueda:MRL11} for these facts. In this way, almost all the basic invariants have been made clear for $M$ (and $M_c$), but it still remains an open question when the continuous core of $M_c$ becomes a full factor (if $M_c$ is of type III$_1$). Here, for a given type III von Neumann algebra we call the `carrier algebra' of its so-called associated covariant system (see \cite[Definition XII.1.3 and XII.1.5]{Takesaki:Book2}) its {\it continuous core}. In this note, we would like to report the following simple solution to the question:  

\begin{theorem}\label{T1} Assume that $M_c$ is of type III$_1$. Then the following conditions are equivalent{\rm:}
\begin{itemize} 
\item[(1)] The continuous core $\widetilde{M_c} := M_c\rtimes_{\sigma^{\varphi_c}}\mathbb{R}$ with $\varphi_c := \varphi\!\upharpoonright_{M_c}$ is full. 
\item[(2)] The $\tau$-invariant $\tau(M_c)$, i.e., the weakest topology on $\mathbb{R}$ making the mapping $t \in \mathbb{R} \mapsto \sigma_t^\varphi \in
\Aut(M)$ continuous in this particular case, is the usual topology on $\mathbb{R}$. 
\item[(3)] For any sequence $t_n$ in $\mathbb{R}$ we have{\rm:} $(\sigma_{t_n}^{\varphi_1},\sigma_{t_n}^{\varphi_2}) \longrightarrow (\Id_{M_1},\Id_{M_2})$ in $\Aut(M_1)\times\Aut(M_2)$ as $n \to\infty$ implies $t_n \longrightarrow 0$ in the usual topology on $\mathbb{R}$ as $n\to\infty$.
\end{itemize} 
\end{theorem}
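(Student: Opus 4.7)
The equivalence $(2)\Leftrightarrow(3)$ is a matter of unwinding definitions. The free-product modular flow $\sigma_t^\varphi$ restricts to $\sigma_t^{\varphi_i}$ on each $M_i$, so any normal state on $M_i$, extended via the $\varphi$-preserving conditional expectation, witnesses that $u$-convergence $\sigma_{t_n}^\varphi\to\Id_M$ forces $\sigma_{t_n}^{\varphi_i}\to\Id_{M_i}$. Conversely, a standard reduced-word argument on the standard form shows that $u$-convergence on both factors extends to $u$-convergence on $M$. The equivalence $(2)\Leftrightarrow(3)$ then follows from the intrinsic description of $\tau(M_c)$ recalled in the introduction.

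For $(1)\Rightarrow(2)$ I would argue by contraposition. If $\tau(M_c)$ is strictly weaker than the usual topology, choose $(t_n)$ with $t_n\not\to 0$ and $\sigma_{t_n}^{\varphi_c}\to\Id$ in $\Aut(M_c)$. Let $\lambda_t\in\widetilde{M_c}$ denote the canonical implementing unitaries. Since $\lambda_{t_n}x\lambda_{t_n}^* = \sigma_{t_n}^{\varphi_c}(x)$ for $x\in M_c$ and $\lambda_{t_n}$ commutes with $L(\mathbb{R})\subset\widetilde{M_c}$, the sequence $(\lambda_{t_n})$ is asymptotically central. To see it is non-trivial, apply the dual action $\theta_s(\lambda_{t_n})=e^{ist_n}\lambda_{t_n}$: if the class of $(\lambda_{t_n})$ in an appropriate ultrapower were scalar, then $e^{ist_n}\to 1$ for every $s\in\mathbb{R}$, forcing $t_n\to 0$, a contradiction. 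Hence $\widetilde{M_c}$ carries a non-trivial central sequence and fails to be full.

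The main difficulty lies in the converse $(3)\Rightarrow(1)$. My plan is to take an element $X$ in the asymptotic centralizer $\widetilde{M_c}^\omega\cap\widetilde{M_c}'$ inside a suitable (Ocneanu-type) ultrapower adapted to the semifinite setting, and show that $X$ is scalar. Using the dual action $\theta$ and its spectral decomposition, split $X$ into its $\theta$-fixed part, which lies in $M_c^\omega\cap M_c'$ and is scalar by the already-established fullness of $M_c$ (see \cite{Ueda:AdvMath11}), and a residual part supported on the Fourier components indexed by the implementing unitaries $\lambda_t$. The crucial step, which I expect to be the main obstacle, is to extract from a hypothetical non-zero $\lambda_t$-component, combined with the central-sequence condition and the reduced-word structure of $M_1\star M_2$, a sequence $(t_n)$ bounded away from zero along which $(\sigma_{t_n}^{\varphi_1},\sigma_{t_n}^{\varphi_2})\to(\Id,\Id)$, thereby violating (3). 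Carrying this out via free-product-specific asymptotic freeness and reduced-word computations, rather than appealing to a general $\lambda=1$ Connes-type equivalence for arbitrary full type III$_1$ factors, is the hard technical heart of the argument, and it is precisely here that the specific free-product structure buys us the needed rigidity.
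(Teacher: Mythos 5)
Your treatment of $(2)\Leftrightarrow(3)$ matches the paper's, and your contrapositive argument for $(1)\Rightarrow(2)$ is correct; using the dual action $\theta_s$ to rule out scalarity of the class of $(\lambda_{t_n})$ is a clean alternative to the paper's explicit computation with $\|\lambda(t_n)\chi_{[-\varepsilon,\varepsilon]}-\zeta\chi_{[-\varepsilon,\varepsilon]}\|_2$ in the regular representation.

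However, for the hard direction you have correctly identified where the difficulty lies but have not closed the gap, and as written the plan would stall on two points. First, you propose to work directly with an ``Ocneanu-type ultrapower adapted to the semifinite setting'' of the type II$_\infty$ algebra $\widetilde{M_c}$, which is exactly the technical nuisance the paper avoids: Lemma \ref{L6} replaces $\widetilde{M_c}$ by the discrete crossed product $Q = M\rtimes_{\sigma_T^\varphi}\mathbb{Z}$, a type III$_\lambda$ factor with a faithful normal state, so the usual $N_\omega\subseteq N^\omega$ machinery applies without modification (and Proposition \ref{P4} guarantees that fullness of $\widetilde{M_c}$ is equivalent to fullness of $Q$). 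Second, the step you flag as ``the hard technical heart'' — showing a would-be nontrivial central element is concentrated on the implementing unitaries and then extracting a sequence $t_n$ bounded away from $0$ with $\sigma_{t_n}^\varphi\to\Id$ — is precisely the content of Lemma \ref{L8}, and it is not proved by a generic reduced-word or asymptotic-freeness argument. The paper first writes $M\rtimes_{\sigma_T^\varphi}\mathbb{Z}$ as an amalgamated free product of $M_i\rtimes_{\sigma_T^{\varphi_i}}\mathbb{Z}$ over $L(\mathbb{Z})$, then invokes \cite[Proposition 3.5]{Ueda:JLMS13} to localize the asymptotic centralizer inside $(\mathbb{C}1\rtimes_{\sigma_T^\varphi}\mathbb{Z})^\omega = L(\mathbb{Z})^\omega$, and the hypotheses of that result are in turn verified by Lemma \ref{L7}, which constructs suitable Haar-like unitaries in the centralizer of $\psi\circ E_{\varphi_1}$ (split into case (a), $M_1$ diffuse non--III$_1$, and case (b), $M_1$ a III$_1$ factor). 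Without these two ingredients — the reduction via Lemma \ref{L6} and the amalgamated-free-product localization of Lemmas \ref{L7}--\ref{L8} — your proposed $\theta$-Fourier decomposition does not by itself produce the sequence $(t_n)$ violating condition (3); that is a genuine gap, not a routine completion.
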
 

The above theorem completes the project to compute all the basic invariants for arbitrary free product von Neumann algebras. (Here we would like to mention that the triviality of the asymptotic bicentralizer of any type III$_1$ free product factor was confirmed by the second-named author using only \cite[Theorem 4, Corollary 8]{ConnesStormer:JFA78} and \cite[Corollary 3.2, Theorem 4.1]{Ueda:AdvMath11}.) One of the important features of Theorem \ref{T1} is that the consequence is formulated in terms of modular automorphisms associated with given states rather than the $\tau$-invariant itself; hence it is suitable for practical use. Moreover, the next corollary is obtained as a particular case of the theorem; see Remark \ref{R9}.

\begin{corollary}\label{C2} Let $\Gamma(\mathcal{H}_\mathbb{R},U_t)''$ be a free Araki--Woods factor of type III$_1$ {\rm(}\cite{Shlyakhtenko:PacificJMath97}{\rm)}. Then the continuous core of $\Gamma(\mathcal{H}_\mathbb{R},U_t)''$ is full if and only if the weakest topology on $\mathbb{R}$ making $t \mapsto U_t$ {\rm(}with respect to the strong operator topology{\rm)} be the usual one. 
\end{corollary}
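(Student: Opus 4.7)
\textbf{Proof plan for Corollary \ref{C2}.} The plan is to reduce Corollary \ref{C2} to Theorem \ref{T1} by realizing any free Araki--Woods factor of type III$_1$ as a free product in a way compatible with the one-parameter group $U_t$, and then to identify the $\tau$-invariant with the topology on $\mathbb{R}$ induced by $U_t$ (SOT).

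First, since $\Gamma(\mathcal{H}_\mathbb{R}, U_t)''$ is of type III$_1$ we must have $\dim_{\mathbb{R}}\mathcal{H}_\mathbb{R} \geq 2$, so I can fix a nontrivial $U_t$-invariant orthogonal decomposition $\mathcal{H}_\mathbb{R} = \mathcal{H}_\mathbb{R}^{(1)} \oplus \mathcal{H}_\mathbb{R}^{(2)}$. By Shlyakhtenko's free product decomposition of free Araki--Woods algebras, the resulting identification is
$$(\Gamma(\mathcal{H}_\mathbb{R}, U_t)'', \varphi_U) \cong (\Gamma(\mathcal{H}_\mathbb{R}^{(1)}, U_t^{(1)})'', \varphi_{U_1}) \star (\Gamma(\mathcal{H}_\mathbb{R}^{(2)}, U_t^{(2)})'', \varphi_{U_2}),$$
where $U_t^{(i)} := U_t|_{\mathcal{H}_\mathbb{R}^{(i)}}$ and $\varphi_{U_i}$ denotes the corresponding free quasi-free state. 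Since $\Gamma$-factors are infinite-dimensional, the exceptional case $(\dim M_1, \dim M_2) = (2,2)$ ruled out in the setup of Theorem \ref{T1} is automatically avoided. This places $\Gamma(\mathcal{H}_\mathbb{R}, U_t)''$ in the framework of Theorem \ref{T1}.

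Second, I would verify that the $u$-topology of $t \mapsto \sigma_t^{\varphi_{U_i}}$ on $\Aut(\Gamma(\mathcal{H}_\mathbb{R}^{(i)}, U_t^{(i)})'')$ coincides with the SOT topology of $t \mapsto U_t^{(i)}$ on $\mathcal{H}_\mathbb{R}^{(i)}$. One direction is easy: for each generator $s(h) = \ell(h) + \ell(h)^*$ with $h \in \mathcal{H}_\mathbb{R}^{(i)}$, one has $\sigma_t^{\varphi_{U_i}}(s(h)) = s(U_t^{(i)} h)$, and SOT convergence of $U_{t_n}^{(i)}$ propagates through words in the $s(h)$'s (which are dense in the standard form) to give $u$-convergence of $\sigma_{t_n}^{\varphi_{U_i}}$. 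Conversely, if $\sigma_{t_n}^{\varphi_{U_i}} \to \Id$ in the $u$-topology, then evaluating on a single generator $s(h)$ applied to the implementing vector $\xi_{\varphi_{U_i}}$ and using the relation $\|s(k)\,\xi_{\varphi_{U_i}}\|^2 = \|k\|^2$ forces $\|U_{t_n}^{(i)} h - h\| \to 0$ for every $h \in \mathcal{H}_\mathbb{R}^{(i)}$.

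Third, combining these with the equivalence (1) $\Leftrightarrow$ (3) of Theorem \ref{T1} yields: the continuous core of $\Gamma(\mathcal{H}_\mathbb{R}, U_t)''$ is full iff for every sequence $t_n$ in $\mathbb{R}$, the convergence $U_{t_n}^{(i)} \to \Id$ in SOT on both $\mathcal{H}_\mathbb{R}^{(i)}$ implies $t_n \to 0$ in the usual topology. Since $\mathcal{H}_\mathbb{R}$ is the orthogonal direct sum of the two $\mathcal{H}_\mathbb{R}^{(i)}$, SOT convergence of $U_{t_n}$ on $\mathcal{H}_\mathbb{R}$ is equivalent to joint SOT convergence on both summands, and the statement rephrases exactly as: the weakest topology on $\mathbb{R}$ making $t \mapsto U_t$ SOT-continuous is the usual one. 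The main delicate point is the bidirectional topological identification in the second step; but the relation $\sigma_t^{\varphi_U} = \mathcal{F}(U_t)$ between the modular group and second quantization reduces it to a direct computation on the $1$-particle level, so no genuine obstacle is expected.
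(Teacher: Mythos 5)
Your overall plan matches the paper's: realize the free Araki--Woods factor as a free product of two free Araki--Woods subalgebras coming from a $U_t$-invariant splitting of $\mathcal{H}_\mathbb{R}$, identify the $u$-topology of $t\mapsto\sigma_t^{\varphi_{U_i}}$ with the SOT on $t\mapsto U_t^{(i)}$, and feed this into Theorem~\ref{T1}. Steps~2 and~3 of your plan are fine in substance, and in fact you spell out more detail on the topological identification than the paper's Remark~\ref{R9} bothers to.

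However, the very first step has a genuine gap. The inference \emph{``$\dim_\mathbb{R}\mathcal{H}_\mathbb{R}\geq 2$, so I can fix a nontrivial $U_t$-invariant orthogonal decomposition''} is false as stated: if $\mathcal{H}_\mathbb{R}\cong\mathbb{R}^2$ and $U_t$ acts by rotation at some nonzero speed, there is \emph{no} nontrivial $U_t$-invariant real subspace. What is true, and what the paper actually proves, is that the type~III$_1$ hypothesis forces $\sharp(\Sp(A)\cap(0,\infty))\geq 2$ (otherwise $\Sp(A)$ is $\{0\}$, $\{-s,s\}$ or $\{-s,0,s\}$ and the factor is of type II$_1$ or III$_\lambda$). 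Once you know this, you can split $\Sp(A)\cap(0,\infty)=B_1\sqcup B_2$ into two nonempty Borel sets and take the spectral projections $P_1=E_A(-B_1\cup\{0\}\cup B_1)$, $P_2=E_A(-B_2\cup B_2)$, which commute both with $U_t$ and with the conjugation $J$ (thanks to $JE_A(B)J=E_A(-B)$), hence come from an orthogonal decomposition of the real Hilbert space. Your proposal skips exactly the two nontrivial points here: that $\dim\geq2$ alone is insufficient, and that the splitting projections must commute with $J$ in order to descend to $\mathcal{H}_\mathbb{R}$. You should replace the dimension argument with this spectral argument; the rest of your plan then goes through.
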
 

There are previously known cases where the continuous cores of free Araki--Woods factors become full; see Shlyakhtenko \cite[Theorem 4.8]{Shlyakhtenko:Crelle98}, Houdayer \cite[Theorem 1.2]{Houdayer:JIMJ10}, and more recent Houdayer--Raum \cite[Theorem B]{HoudayerRaum:arXiv:1406.6160} ({\it n.b.}~the second needs \cite[Proposition 7]{Ozawa:ActaMath04} with $\mathcal{N}_0 = \mathcal{M}$ there). However, any kind of characterization such as the above corollary has never been known so far. 

\medskip
Another important class of full factors of type III$_1$ whose $\tau$-invariants are already computed consists of so-called Bernoulli crossed products. In fact, Vaes and Verraedt \cite[\S\S2.5]{VaesVerraedt:arXiv:1408.6414} recently proved that any Bernoulli crossed product of non-amenable group must be a full factor, and moreover computed its $\tau$-invariant in terms of given data, generalizing Connes's original work \cite{Connes:JFA74}. In the appendix, we will explain that our method of proving Theorem \ref{T1} works well even for (generalized) Bernoulli crossed products; see Theorem \ref{T11} for the precise assertion. 

\medskip   
This note uses the same standard notation as in \cite{Ueda:AdvMath11},\cite{Ueda:MRL11} (except the appendix, where the notation follows \cite[\S\S2.5]{VaesVerraedt:arXiv:1408.6414}). We will freely use (Ocneanu) ultraproducts and asymptotic centralizers (denoted by $N^\omega \supseteq N_\omega$, respectively, for given von Neumann algebras $N$), for which we refer to \cite[\S\S2.2]{Ueda:AdvMath11} as a brief summary and to \cite{AndoHaagerup:JFA14} as a detailed reference. Our discussion below is fairly simple, though it depends upon some previous works \cite{Ueda:JLMS13}, \cite[\S\S2.1]{Ueda:arXiv:1207.6838v3} (based on \cite[Theorem 4.1]{Ueda:AdvMath11}), \cite[\S\S2.5]{VaesVerraedt:arXiv:1408.6414} and the automorphism analysis due to Connes and Ocneanu.
 
\section{Preliminary Facts}

Let us start with a general lemma on group actions on factors. Our intuition about it came from quite a recent result \cite[Theorem 7.7]{MasudaTomatsu:arXiv:1206.0955v2}. 


\begin{lemma}\label{L3} Let $\alpha \colon \Gamma \curvearrowright N$ be an action of a countable discrete abelian group on a factor with separable predual. Let $\alpha_\omega \colon \Gamma \curvearrowright N_\omega$ be the action on the asymptotic centralizer $N_\omega$ arising from $\alpha$. Then, for every $p \in \Ker(\alpha_\omega)^\perp$ {\rm(}in the dual $\widehat{\Gamma}${\rm)} there exists a unitary $u\in N_\omega$ such that $\alpha_{\omega,\gamma}(u)=\langle \gamma,p\rangle u$ holds for all $\gamma\in \Gamma$, where $\langle\,\cdot\,,\,\cdot\,\rangle$ is the dual pairing between $\Gamma$ and $\widehat{\Gamma}$ and $\Lambda^\perp := \{ p \in \widehat{\Gamma} \mid \langle \Lambda, p \rangle = 0\}$ for a subgroup $\Lambda$ of $\Gamma$. Moreover, for every $p \in \Ker(\alpha_\omega)^\perp$, the dual action $\widehat{\alpha}_p$ is approximately inner, that is, it falls in the closure of $\Int(N\rtimes_\alpha\Gamma)$. 
\end{lemma}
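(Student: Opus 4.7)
The plan is to extract the spectral unitary from a Rohlin/Connes--Ocneanu analysis on the asymptotic centralizer, and then read off the approximate innerness of the dual action via a routine ultrapower lifting.

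Set $\Lambda := \Ker(\alpha_\omega)$, a subgroup of $\Gamma$. The action $\alpha_\omega$ descends to an action $\bar\alpha \colon \Gamma/\Lambda \curvearrowright N_\omega$ whose kernel is trivial, and Pontryagin duality identifies $\widehat{\Gamma/\Lambda}$ with $\Lambda^\perp = \Ker(\alpha_\omega)^\perp$. Under this identification, the first assertion of the lemma is equivalent to the claim that for each $\chi \in \widehat{\Gamma/\Lambda}$ the $\chi$-spectral subspace $\{x \in N_\omega : \bar\alpha_{\bar\gamma}(x) = \chi(\bar\gamma)\,x\ \forall\bar\gamma\}$ contains a unitary.

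To obtain this spectral unitary I would invoke a Connes--Ocneanu type Rohlin/spectral theorem for (centrally) free actions of countable discrete abelian groups on asymptotic centralizers, in the spirit of \cite[Theorem 7.7]{MasudaTomatsu:arXiv:1206.0955v2} cited above. The essential inputs are that $N$ is a factor with separable predual, so $N_\omega$ is rich enough to carry Rohlin towers modeled on $\Gamma/\Lambda$, together with the kernel-triviality of $\bar\alpha$; these combine to force the Arveson spectrum of $\bar\alpha$ on $N_\omega$ to equal the entire dual $\widehat{\Gamma/\Lambda}$, with each point realized by a unitary eigenvector via polar decomposition of a nonzero spectral element. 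This step is the technical heart.

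For the \emph{moreover} clause, fix $p \in \Ker(\alpha_\omega)^\perp$ and a unitary $u \in N_\omega$ as above. Lift $u$ to a centralizing sequence $(u_n)$ of unitaries in $N$ representing it. The inner automorphisms $\Ad(u_n)$ of $N\rtimes_\alpha\Gamma$ converge in the $u$-topology to $\widehat\alpha_{-p}$: on $a \in N$, $\Ad(u_n)(a) \to a$ by asymptotic centrality, while on the canonical implementing unitary $v_\gamma$,
\begin{equation*}
\Ad(u_n)(v_\gamma) \;=\; u_n\,\alpha_\gamma(u_n^{*})\,v_\gamma \;\longrightarrow\; \overline{\langle\gamma,p\rangle}\,v_\gamma,
\end{equation*}
using $\alpha_\gamma(u_n) \to \langle\gamma,p\rangle\,u_n$ in the ultrapower together with $u_n u_n^{*}=1$. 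Since $\Ker(\alpha_\omega)^\perp$ is a subgroup of $\widehat\Gamma$, replacing $p$ by $-p$ shows $\widehat\alpha_p \in \overline{\Int(N\rtimes_\alpha\Gamma)}$ for every $p \in \Ker(\alpha_\omega)^\perp$.

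The principal obstacle lies squarely in the spectral step: converting the mere kernel-triviality of $\bar\alpha$ into a genuine Rohlin-type production of spectral unitaries in $N_\omega$. Once this input is secured, everything else is routine bookkeeping.
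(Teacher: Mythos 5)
Your overall direction is the same as the paper's (work on $N_\omega$, quotient by $\Ker(\alpha_\omega)$, dualize, lift to unitaries), and your treatment of the ``moreover'' clause is essentially the paper's argument modulo a sign convention and the routine check that convergence of $\Ad u_n$ on $N$ and on the implementing unitaries actually gives convergence in the $u$-topology on all of $N\rtimes_\alpha\Gamma$ (the paper carries this out explicitly via an estimate on functionals $y\widetilde{\psi}$). But in the first part you leave the decisive step as a black box, and there is a concrete gap in what you sketch.

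The paper does not go through a spectral-subspace/Arveson-spectrum argument. It observes that $\gamma\mapsto\langle\gamma,p\rangle 1$ is a (scalar-valued, hence automatic) $\alpha_\omega$-$1$-cocycle with values in $U(N_\omega)$, invokes \cite[Proposition 2.1.2]{Connes:ASENS75} to conclude that the induced action of $\Gamma/\Ker(\alpha_\omega)$ on $N_\omega$ is \emph{properly outer} --- which is strictly stronger than the ``kernel-triviality'' you write, and is exactly the hypothesis needed --- and then applies Ocneanu's $1$-cohomology vanishing theorem \cite[\S 7.2]{Ocneanu:LMN1138} for properly outer actions of discrete amenable groups. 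Trivializing that cocycle produces a unitary $u\in N_\omega$ with $\alpha_{\omega,\gamma}(u)=\langle\gamma,p\rangle u$ directly. Your route via ``nonzero spectral element, then polar decomposition'' has an unaddressed wrinkle: the polar part of a spectral element is only a partial isometry, and upgrading it to a unitary requires a Murray--von Neumann equivalence of the complementary projections in $(N_\omega)^{\bar\alpha}$, which you neither state nor justify. That is precisely the kind of bookkeeping the cohomological formulation bypasses, because the vanishing theorem hands you a unitary outright. So: right framework, but the missing ingredient is the identification of the problem as a $1$-cocycle trivialization for a properly outer amenable action, for which a citable theorem exists.
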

\begin{proof}
By \cite[Proposition 2.1.2]{Connes:ASENS75}, the action $\alpha_\omega$ induces a properly outer action of $\Gamma/\Ker(\alpha_\omega)$ on $N_\omega$. Note that the dual of $\Gamma/\Ker(\alpha_\omega)$ is naturally identified with $\Ker(\alpha_\omega)^\perp$ in $\widehat{\Gamma}$. Let $p \in \Ker(\alpha_\omega)^\perp$ be arbitrarily chosen. We apply the so-called $1$-cohomology vanishing theorem \cite[\S7.2]{Ocneanu:LMN1138} to the (rather simple) cocycle $\gamma \mapsto \langle \gamma,p\rangle1 \in N_\omega$ with the above properly outer action, and get the desired unitary $u \in N_\omega$. 

Since $u \in N_\omega$ ($\subseteq N'\cap N^\omega$ trivially), one easily observes that $\widehat{\alpha}_p(x) = u x u^*$ holds inside $(N\rtimes_\alpha\Gamma)^\omega$ for every $x \in N\rtimes_\alpha\Gamma$. Thanks to \cite[Proposition 1.1.3 (b)]{Connes:ASENS75} we can choose a representing sequence $u_n$ of $u$ in such a way that it consists of unitaries. Let $\psi$ be a faithful normal state on $N$, and set $\widetilde{\psi}:=\psi\circ E_N$
with the canonical conditional expectation
$E_N \colon N\rtimes_\alpha\Gamma \to N$. For any $y, z \in N\rtimes_\alpha\Gamma$ one has 
\begin{align*}
|((y\widetilde{\psi})\circ\widehat{\alpha}_p - (y\widetilde{\psi})\circ
\Ad u_n)(z)| 
&\leq
\Vert\widehat{\alpha}_p^{-1}(y) - u_n^* y u_n\Vert_{\widetilde{\psi}}\,\Vert z\Vert_\infty + |\widetilde{\psi}(u_n z u_n^* y) - \widetilde{\psi}(zu_n^* y u_n)| \\
&\leq 
\Vert\widehat{\alpha}_p^{-1}(y)- u_n^* y u_n\Vert_{\widetilde{\psi}}\,\Vert z\Vert_\infty
+
\Vert \psi u_n -u_n \psi\Vert\,\Vert y\Vert_\infty \Vert z\Vert_\infty
\end{align*} 
so that 
\[
\Vert (y\widetilde{\psi})\circ\widehat{\alpha}_p -
(y\widetilde{\psi})\circ\Ad u_n\Vert
\leq 
\Vert\widehat{\alpha}_p^{-1}(y)-u_n^* y u_n\Vert_{\widetilde{\psi}} + \Vert \psi u_n -u_n \psi\Vert\,\Vert y\Vert_\infty \longrightarrow 0
\]
as $n \to \omega$. Therefore, $\widehat{\alpha}_p = \lim_{n\to\omega}\Ad u_n$ in $\Aut(N\rtimes_\alpha\Gamma)$, because the $y\widetilde{\psi}$ form a dense subset of the predual.  
\end{proof}

For a given type III$_\lambda$ factor, we call the canonical type II$_\infty$ factor $\mathcal{N}_0$ in \cite[Theorem XII.2.1]{Takesaki:Book2} the {\it discrete core} of the type III$_\lambda$ factor. The discrete core is indeed uniquely determined from the given type III$_\lambda$ factor. Moreover, it can explicitly be constructed based on the Takesaki duality; see the proof of \cite[Theorem XII.2.1]{Takesaki:Book2}). 

\begin{proposition}\label{P4} Let $\lambda \in (0,1)$ and set $T := 2\pi/|\log\lambda|$. Let $Q$ be a type III$_\lambda$ factor with separable predual. Then $Q$ is full if and only if so is its discrete core $\widehat{Q} := Q\rtimes_{\sigma^\chi}(\mathbb{R}/T\mathbb{Z})$ with a periodic state $\chi$ {\rm(}i.e., a faithful normal state with $\sigma_T^\chi = \Id_Q${\rm)}. 
\end{proposition}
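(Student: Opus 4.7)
My plan is to apply Takesaki duality to the compact abelian group $G:=\mathbb{R}/T\mathbb{Z}$ acting on $Q$ via the periodic modular automorphism group $\sigma^\chi$, which yields the canonical isomorphism
\[
\widehat{Q}\rtimes_{\widehat{\sigma}^\chi}\widehat{G} \;\cong\; Q\botimes B(L^2(G)),
\]
with $\widehat{G}\cong\mathbb{Z}$ and the dual action $\theta:=\widehat{\sigma}^\chi$ of $\mathbb{Z}$ on $\widehat{Q}$ scaling the canonical trace by $\lambda$ (this is precisely the discrete decomposition in \cite[Theorem XII.2.1]{Takesaki:Book2}). Since fullness is stable under $\botimes B(H)$, the proposition reduces to proving that $\widehat{Q}$ is full if and only if the $\mathbb{Z}$-crossed product $\widehat{Q}\rtimes_\theta\mathbb{Z}$ is full.

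For $(\Leftarrow)$, the trace-scaling automorphism $\theta$ has module $\lambda\neq 1$, so the embedding $\mathbb{Z}\to\Aut(\widehat{Q})$, $k\mapsto\theta^k$, is proper for the $u$-topology: any sequence $k_n\in\mathbb{Z}$ with $\theta^{k_n}\to\Id$ forces $\lambda^{k_n}\to 1$ and hence $k_n\to 0$. Combined with fullness of $\widehat{Q}$, a standard Fourier-mode analysis of $(\widehat{Q}\rtimes_\theta\mathbb{Z})_\omega$ (in the spirit of the automorphism analysis of Connes and Ocneanu) collapses every higher Fourier coefficient to zero: a coefficient $y_n$ of degree $k\neq 0$ satisfies $y_n\theta^{-k}(a)-ay_n\to 0$ for all $a\in\widehat{Q}$, and then $y_n^* y_n$ is asymptotically central in $\widehat{Q}$ (hence scalar), which would make $\theta^{-k}$ approximately inner and contradict the previous module observation unless $y_n\to 0$.

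For $(\Rightarrow)$, I assume $Q$ is full (hence $\widehat{Q}\rtimes_\theta\mathbb{Z}$ is full) and, for contradiction, that $\widehat{Q}_\omega\neq\mathbb{C}$. Consider the induced action $\theta_\omega\colon\mathbb{Z}\curvearrowright\widehat{Q}_\omega$. If $\theta_\omega=\Id$ (Case A), then for any non-scalar class $[x_n]\in\widehat{Q}_\omega$ one has $[x_n,u]=(x_n-\theta(x_n))u\to 0$, where $u$ is the implementing unitary; this places $[x_n]$ in $(\widehat{Q}\rtimes_\theta\mathbb{Z})_\omega\setminus\mathbb{C}$ and contradicts fullness of the crossed product. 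Otherwise (Case B), $\Ker(\theta_\omega)\subsetneq\mathbb{Z}$ and $\Ker(\theta_\omega)^\perp$ is a non-trivial subgroup of $\widehat{\mathbb{Z}}\cong\mathbb{T}$; I choose $p\in\Ker(\theta_\omega)^\perp\setminus\{0\}$. Lemma \ref{L3} then gives that $\widehat{\theta}_p$ is approximately inner on $\widehat{Q}\rtimes_\theta\mathbb{Z}$. Via the Takesaki-duality isomorphism together with Pontryagin's $\widehat{\widehat{G}}\cong G$, the bidual action $\widehat{\theta}_p$ on $Q\botimes B(L^2(G))$ coincides, up to the inner $\Ad(\rho_t)$ on $B(L^2(G))$, with $\sigma^\chi_t\otimes\Id$ for the element $t\in\mathbb{R}/T\mathbb{Z}$ corresponding to $p$. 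Hence $\sigma^\chi_t$ is approximately inner on $Q$; but $Q$ full means $\Int(Q)$ is closed in $\Aut(Q)$, so $\sigma^\chi_t\in\Int(Q)$, which (by the classical fact that $T(Q)=T\mathbb{Z}$ for a type III$_\lambda$ factor) gives $t\in T\mathbb{Z}$, i.e., $p=0$, contradicting the choice of $p$.

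The main obstacle I anticipate is the identification step in Case B: transporting $\widehat{\theta}_p$ through the composition of Takesaki duality and Pontryagin duality to a modular automorphism $\sigma^\chi_t$ on the first tensor factor, and then correctly absorbing the translation on $L^2(G)$ as an inner automorphism of $B(L^2(G))$. Once this is in place, the classical computation $T(Q)=T\mathbb{Z}$ for type III$_\lambda$ is what finally powers the contradiction. The $(\Leftarrow)$ direction is conceptually cleaner but still depends on the Fourier-mode analysis referenced above, which is routine given that the module invariant prevents $\theta^k$ from being approximately inner for any $k\neq 0$.
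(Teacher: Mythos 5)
Your ``only if'' direction (Cases A and B) is essentially the paper's argument, just reorganized. The paper first shows that $\theta_\omega$ is trivial by deriving a contradiction from Lemma~\ref{L3} together with Takesaki duality and $T(Q)=T\mathbb{Z}$ (exactly your Case B), and then deduces $\widehat{Q}_\omega=\mathbb{C}1$ from triviality of $\theta_\omega$ plus fullness of $\widehat{Q}\rtimes_\theta\mathbb{Z}\cong Q$ (your Case A). One small ordering point in your Case B: you pass from ``$\widehat{\theta}_p$ is approximately inner on $\widehat{Q}\rtimes_\theta\mathbb{Z}$'' to ``$\sigma^\chi_t$ is approximately inner on $Q$'' and only then invoke fullness. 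The step of pushing approximate innerness of $\alpha\otimes\mathrm{Ad}(\rho_t)$ on $Q\botimes B(L^2(G))$ down to approximate innerness of $\alpha$ on $Q$ is not immediate, because the approximating unitaries need not live in $Q\otimes1$. The clean route, which the paper takes, is to first use fullness of $\widehat{Q}\rtimes_\theta\mathbb{Z}\cong Q$ to upgrade ``approximately inner'' to ``inner'', and only then peel off the $B(L^2(G))$-factor (an inner automorphism of the form $\alpha\otimes\mathrm{Id}$ on a tensor product with $B(H)$ is implemented by a unitary commuting with $1\otimes B(H)$, hence lying in $Q\otimes1$).

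Where you genuinely diverge from the paper is the ``if'' direction. The paper disposes of it in one line: $\widehat{Q}$ is stably isomorphic to the centralizer $Q_\chi$, so $Q_\chi$ is full, and then Connes \cite[Proposition 2.3(2)]{Connes:JFA74} gives fullness of $Q$. Your Fourier-mode analysis of $(\widehat{Q}\rtimes_\theta\mathbb{Z})_\omega$ aims at the same conclusion but is considerably more work, and as written it is not complete: you must justify that all (infinitely many) nonzero Fourier coefficients of a central sequence vanish uniformly, and that a bounded sequence $y_n$ with $y_n^*y_n$ asymptotically scalar and bounded below actually produces near-unitaries witnessing approximate innerness of $\theta^{-k}$ in the $u$-topology for II$_\infty$ factors (where ``full'' must be phrased via strongly central sequences). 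These details are in the spirit of Connes--Ocneanu and can likely be filled in, but the paper's citation of the stable isomorphism $\widehat{Q}\sim Q_\chi$ and Connes's result sidesteps all of it. If you want a self-contained proof, this is the part that needs to be fleshed out; otherwise, replacing it by the paper's citation is the efficient route.
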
 
\begin{proof} (The `if' part.) Assume that $\widehat{Q}$ is full. It is known that $\widehat{Q}$ is stably isomorphic to $Q_\chi$. Hence $Q_\chi$ is also full. By \cite[Proposition 2.3 (2)]{Connes:JFA74} $Q$ must be full. 

\medskip
(The `only if' part.) Assume next that $Q$ is full. Let us denote by $\theta \colon \mathbb{Z} \curvearrowright \widehat{Q}$ the dual action of $\sigma^\chi\colon\mathbb{R}/T\mathbb{Z} \curvearrowright Q$. 

Suppose that $\theta_\omega$ is a non-trivial action. By Lemma \ref{L3} there exists $\zeta \in \mathbb{T}\setminus\{1\}$ so that the dual action $\widehat{\theta}_\zeta$ falls in the closure of $\Int(\widehat{Q}\rtimes_\theta\mathbb{Z})$. Since $\widehat{Q}\rtimes_\theta\mathbb{Z} \cong Q$ is full, we conclude that $\widehat{\theta}_\zeta$ must be inner. However, $\widehat{\theta}$ is the bi-dual action of $\sigma^\chi \colon [0,T) = \mathbb{R}/T\mathbb{Z} = \mathbb{T} \curvearrowright Q$, and therefore, by \cite[Theorem X.2.3 (iv)]{Takesaki:Book2} $\sigma^\chi_t$ is inner for some $0 < t < T$, a contradiction. Hence we have shown that $\theta_\omega$ is indeed the trivial action. 

Let $v \in \widehat{Q}_\omega$ be an arbitrary unitary. Since $\theta_\omega$ is trivial, we observe that $x=vxv^*$ inside $(\widehat{Q}\rtimes_\theta\mathbb{Z})^\omega$ for every $x \in \widehat{Q}\rtimes_\theta\mathbb{Z}$.
The same argument as that for getting
$\widehat{\alpha}_p = \lim_{n\to\omega}\Ad u_n$ in the proof of Lemma \ref{L3} shows that $v \in (\widehat{Q}\rtimes_\theta\mathbb{Z})_\omega \cong Q_\omega = \mathbb{C}1$. 
\end{proof} 

The proof of Proposition \ref{P4} (especially, its `only if' part) actually works well, without any essential change, for showing the next proposition. Note that the discrete decomposition is well-defined for any full type III$_1$ factor as long as it is possible; see \cite{Connes:JFA74} (and also \cite[\S\S2.2]{Ueda:arXiv:1207.6838v3} for its explicit construction based on the Takesaki duality). 

\begin{proposition}\label{P5} 
The discrete core of any full type III$_1$ factor  must be full {\rm(}if it exists{\rm)}. 
\end{proposition}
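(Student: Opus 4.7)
My plan is to transcribe the `only if' argument of Proposition \ref{P4}, with the periodic data $(\mathbb{Z}, \mathbb{T})$ replaced by the almost periodic data furnished by the discrete decomposition of the full type III$_1$ factor $Q$. Fix a faithful normal state $\chi$ on $Q$ realizing the decomposition: $Q_\chi$ is a factor and $\sigma^\chi \colon \mathbb{R} \curvearrowright Q$ extends continuously to an action $\sigma' \colon K \curvearrowright Q$ of the compact abelian group $K := \widehat{\Gamma}$, where $\Gamma \subset \mathbb{R}$ is the countable subgroup obtained (additively) from the point spectrum of $\Delta_\chi$. The discrete core then takes the form $\widehat{Q} = Q \rtimes_{\sigma'} K$, its dual action $\theta \colon \Gamma \curvearrowright \widehat{Q}$ is well-defined, and Takesaki duality identifies $\widehat{Q} \rtimes_\theta \Gamma$ with $Q \otimes B(L^2(K))$ in such a way that the bidual $\widehat{\theta}_p$, for each $p \in K$, corresponds to $\sigma'_p$ modulo an inner automorphism on the $B(L^2(K))$ tensor factor.

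Suppose first that $\theta_\omega$ is non-trivial on $\widehat{Q}_\omega$. Lemma \ref{L3} supplies $p \in \Ker(\theta_\omega)^\perp \setminus \{e\} \subset K$ for which $\widehat{\theta}_p$ lies in the closure of $\Int(\widehat{Q}\rtimes_\theta\Gamma)$. Because $Q$ is full, so is $Q \otimes B(L^2(K)) \cong \widehat{Q}\rtimes_\theta\Gamma$ (fullness is preserved under tensoring with a type I factor), hence $\widehat{\theta}_p$ is actually inner, which via Takesaki duality translates to $\sigma'_p \in \Int(Q)$. On the other hand, $\sigma'_p$ fixes $Q_\chi$ pointwise by continuity from $\mathbb{R}$, so a presentation $\sigma'_p = \Ad u$ forces $u \in Q_\chi'\cap Q = \mathbb{C}$ (the last equality being standard for the discrete decomposition, $Q_\chi$ being a factor with trivial relative commutant in $Q$); thus $\sigma'_p = \Id$, and then faithfulness of $\sigma'$ as a $K$-action (each $\gamma \in \Gamma$ labels a non-zero spectral subspace of $\Delta_\chi$) yields $p = e$, a contradiction. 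Therefore $\theta_\omega$ is trivial. For any unitary $v \in \widehat{Q}_\omega$, the final estimate from the proof of Lemma \ref{L3} (applied to a representing sequence of unitaries for $v$) then upgrades the triviality $\theta_\gamma(v) = v$ for all $\gamma \in \Gamma$ to the statement $v \in (\widehat{Q}\rtimes_\theta\Gamma)_\omega$. Since $\widehat{Q}\rtimes_\theta\Gamma$ is stably isomorphic to the full factor $Q$, this asymptotic centralizer is $\mathbb{C}1$, so $v$ is scalar, proving $\widehat{Q}_\omega = \mathbb{C}1$.

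The real work, I expect, is bookkeeping rather than a new idea: one must unpack the precise content of the discrete decomposition of a full type III$_1$ factor as constructed in \cite[\S\S2.2]{Ueda:arXiv:1207.6838v3} --- especially the Takesaki-duality identification $\widehat{Q}\rtimes_\theta\Gamma \cong Q\otimes B(L^2(K))$ and the outerness statement that $\sigma'_p \in \Int(Q)$ forces $p = e$. Once those are in hand, the argument is essentially a word-for-word analogue of the `only if' half of Proposition \ref{P4}.
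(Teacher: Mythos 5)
Your proof is correct and follows the same strategy as the paper, which simply remarks that the `only if' argument of Proposition \ref{P4} ``works well, without any essential change'' once the discrete decomposition of a full type III$_1$ factor is invoked. In fact you have made explicit the one step that genuinely requires modification --- replacing the type III$_\lambda$ period argument (that $\sigma^\chi_t$ cannot be inner for $0<t<T$) by the observation that $\sigma'_p=\Ad u$ together with $\sigma'_p\!\upharpoonright_{Q_\chi}=\Id$ forces $u\in Q_\chi'\cap Q=\mathbb{C}1$, whence $p=e$ by faithfulness of $\sigma'$ --- which is exactly what the paper leaves to the reader.
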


Our question is about the fullness of certain {\it continuous} crossed product factors {\it of type II$_\infty$}, but the next lemma says that it is equivalent to that of certain {\it discrete} crossed product factors {\it of type III$_\lambda$}. 

\begin{lemma}\label{L6} Let $P$ be a type III$_1$ factor with separable predual and  $\chi$ be a faithful normal state on it. Let $\lambda \in (0,1)$ be arbitrarily chosen and set $T := 2\pi/|\log\lambda|$. Then the continuous core $\widetilde{P}$ is full if and only if so is the type III$_\lambda$ factor $Q := P\rtimes_{\sigma_T^\chi}\mathbb{Z}$. 
\end{lemma}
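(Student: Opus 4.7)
The plan is to identify $\widetilde{P}$ with the discrete core of $Q$ and then invoke Proposition \ref{P4}. First, applying Proposition \ref{P4} to the type III$_\lambda$ factor $Q$ reduces the fullness of $Q$ to that of its discrete core $\widehat{Q}:=Q\rtimes_{\sigma^{\chi'}}(\mathbb{R}/T\mathbb{Z})$, where $\chi'$ is any faithful normal periodic state on $Q$ (guaranteed by Connes's discrete decomposition).

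The key remaining step is to produce a natural isomorphism $\widetilde{P}\cong\widehat{Q}$. The algebra $Q$ sits inside $\widetilde{P}$ via the identification of its canonical unitary generator $u$ with $\lambda(T)\in\widetilde{P}$, and the unitaries $\{\lambda(t):t\in[0,T)\}$ generate $\widetilde{P}$ over $Q$. However, conjugation by $\lambda(t)$ extends $\sigma^\chi_t$ on $P$ to $Q$ by fixing $u$, and this extended action becomes inner (equal to $\Ad u$) at $t=T$, so it is not $T$-periodic. To promote it to a genuine $\mathbb{R}/T\mathbb{Z}$-action, I would choose a positive bounded operator $h\in W^*(u)\subset Q$ with $h^{iT}=u$ (for instance $h:=e^{\arg(u)/T}$ using the principal branch $\arg\colon\mathbb{T}\to[0,2\pi)$), and set $\nu(t):=\lambda(t)h^{-it}$. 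Because $h$ commutes with $u$ and $\lambda(t)$ fixes $u$, a direct check shows that $\nu\colon\mathbb{R}\to U(\widetilde{P})$ is a strongly continuous one-parameter group satisfying $\nu(T)=1$, hence descends to a representation of $\mathbb{R}/T\mathbb{Z}$; moreover $\Ad\nu(t)|_Q$ coincides with the modular automorphism $\sigma^{\chi'}_t$ of a periodic state $\chi'$ on $Q$ obtained from the dual weight $\tilde{\chi}$ of $\chi$ by a Pedersen--Takesaki perturbation involving $h$. The induced normal $\ast$-homomorphism $\widehat{Q}\to\widetilde{P}$ is surjective, since its image contains $Q$ and every $\lambda(t)=\nu(t)h^{it}$, hence all of $\widetilde{P}$; being a non-zero normal map between factors, it must be an isomorphism.

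Combining these two steps, the fullness of $\widetilde{P}$ is equivalent to that of $\widehat{Q}$, which in turn is equivalent to that of $Q$. The main obstacle is the second step, namely verifying that the modular action $\sigma^{\chi'}$ of the constructed periodic state really agrees with $\Ad\nu(t)|_Q$; this requires careful tracking of Connes cocycles and scalar phases. A less computational alternative would be to appeal to a Mackey-style decomposition of the iterated crossed product $P\rtimes(T\mathbb{Z})\rtimes(\mathbb{R}/T\mathbb{Z})$, but the explicit construction of $\nu(t)$ keeps the argument self-contained.
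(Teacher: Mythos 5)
Your strategy mirrors the paper's: apply Proposition \ref{P4} to $Q$ and identify its discrete core $\widehat{Q}$ with the continuous core $\widetilde{P}$. The paper simply cites \cite[Lemma XVIII.4.17 (i)]{Takesaki:Book3} for the latter identification, whereas you sketch a direct proof. Your auxiliary constructions are on target: $h\in W^*(u)$ with $h^{iT}=u$ is well-defined because $u$ is a Haar unitary (so the spectral projection at $1$ vanishes), $\nu(t):=\lambda(t)h^{-it}$ is a strongly continuous one-parameter group with $\nu(T)=1$, and the step you flag as the main obstacle --- that $\Ad\nu(t)|_Q=\sigma^{\chi'}_t$ for some periodic weight $\chi'$ --- is actually routine: since $\sigma^{\widetilde{\chi}}$ fixes $W^*(u)$, the family $t\mapsto h^{-it}$ is automatically a $\sigma^{\widetilde{\chi}}$-cocycle, and the perturbation $\chi':=\widetilde{\chi}(h^{-1}\,\cdot\,)$ has modular group $\sigma^{\chi'}_t = \Ad(h^{-it})\circ\sigma^{\widetilde{\chi}}_t = \Ad\nu(t)|_Q$, which is $T$-periodic by construction.

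The genuine gap is the sentence ``the induced normal $*$-homomorphism $\widehat{Q}\to\widetilde{P}$.'' A von Neumann algebraic crossed product carries no universal property: a covariant pair $(\iota,\nu)$ in $\widetilde{P}$ does \emph{not} by itself produce a normal $*$-homomorphism out of $\widehat{Q}=Q\rtimes_{\sigma^{\chi'}}(\mathbb{R}/T\mathbb{Z})$, even if the image of the pair generates $\widetilde{P}$. To close this, one needs either an explicit spatial unitary intertwining the standard pictures of $\widehat{Q}$ and $\widetilde{P}$, or a Landstad/Takesaki-duality argument: exhibit the dual $(\mathbb{R}/T\mathbb{Z})^{\wedge}\cong\mathbb{Z}$-action on $\widetilde{P}$ as the restriction of the dual action $\hat\sigma$ to $(2\pi/T)\mathbb{Z}\subset\mathbb{R}$, verify that its fixed-point algebra is exactly $Q$ (this requires an averaging/integrability argument, not merely the observation that it contains $Q$), and then invoke the characterization of crossed products via integrable dual actions. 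Carrying out these verifications is precisely the content of the cited Takesaki lemma; without them your argument does not yet yield the isomorphism you need.
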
 
\begin{proof} By \cite[Lemma XVIII.4.17 (i)]{Takesaki:Book3} and its proof we have known that $Q$ is indeed a type III$_\lambda$ factor and that its discrete core $\widehat{Q}$ is identified with the continuous core $\widetilde{P}$. Thus the desired assertion immediately follows from Proposition \ref{P4}. 
\end{proof} 

\section{Proof of Theorem \ref{T1}}

Our main concern is to prove (2) $\Rightarrow$ (1) of Theorem \ref{T1}. If both $M_1, M_2$ are (possibly infinite) direct sums of type I factors, then both $\varphi_1, \varphi_2$ are almost periodic and so is the positive linear functional $\varphi_c$ (see \cite[Theorem 2.1]{Ueda:MRL11}); hence $\tau(M_c)$ does never become the usual topology on $\mathbb{R}$. Therefore, we may and do assume that $M_1$ has a diffuse direct summand. Note here that $\tau$-invariant is a von Neumann algebraic invariant. Hence, by the trick explained at the beginning of \cite[\S\S2.1]{Ueda:arXiv:1207.6838v3} we may and do further assume that $M_1$ is either (a) a diffuse von Neumann algebra with no type III$_1$ factor direct summands or (b) a type III$_1$ factor. In each case, $M=M_c$ holds thanks to \cite[Theorem 4.1]{Ueda:AdvMath11}. In what follows, we fix $\lambda \in (0,1)$ and set $T := 2\pi/|\log\lambda|$, and it suffices, thanks to Lemma \ref{L6}, to prove that $M\rtimes_{\sigma_T^\varphi}\mathbb{Z}$ is full under condition (2) of Theorem \ref{T1}. We need two technical lemmas. 

\begin{lemma}\label{L7} With the conditional expectation
$E_{\varphi_1} := (\varphi_1\botimes \Id)\!\upharpoonright_{M_1\rtimes_{\sigma_T^{\varphi_1}}\mathbb{Z}}\colon M_1\rtimes_{\sigma_T^{\varphi_1}}\mathbb{Z} \to \mathbb{C}1\rtimes_{\sigma_T^{\varphi_1}}\mathbb{Z}$, one can find a faithful normal state $\psi$ on $\mathbb{C}1\rtimes_{\sigma_T^{\varphi_1}}\mathbb{Z}$ so that for each natural number $n \geq 2$ there exists a unitary $u_n \in (M_1\rtimes_{\sigma_T^{\varphi_1}}\mathbb{Z})_{\psi\circ E_{\varphi_1}}$ such that $E_{\varphi_1}(u_n^k) = 0$ as long as $1 \leq k \leq n-1$. 
\end{lemma}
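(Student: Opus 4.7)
The plan is to take $\psi$ to be the tracial state on $\mathbb{C}1\rtimes_{\sigma_T^{\varphi_1}}\mathbb{Z}\cong L(\mathbb{Z})$ determined by $\psi(U^k)=\delta_{k,0}$ (here $U$ denotes the canonical unitary implementing $\sigma_T^{\varphi_1}$), and to produce each $u_n$ as a simple unitary lying inside the centralizer $(M_1)_{\varphi_1}$ of $\varphi_1$ in $M_1$.

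With this choice of $\psi$, a direct computation on elements of the form $xU^k$ ($x\in M_1$) gives $\Psi:=\psi\circ E_{\varphi_1}=\varphi_1\circ E_{M_1}$, where $E_{M_1}\colon M_1\rtimes_{\sigma_T^{\varphi_1}}\mathbb{Z}\to M_1$ is the canonical conditional expectation. Since $\sigma_T^{\varphi_1}$ preserves $\varphi_1$, the standard computation for dual states on $\mathbb{Z}$-crossed products yields $\sigma_t^\Psi\!\upharpoonright_{M_1}=\sigma_t^{\varphi_1}$ and $\sigma_t^\Psi(U)=U$; hence the centralizer $(M_1\rtimes_{\sigma_T^{\varphi_1}}\mathbb{Z})_\Psi$ contains $(M_1)_{\varphi_1}$. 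For any unitary $u\in(M_1)_{\varphi_1}$ one moreover has $E_{\varphi_1}(u^k)=\varphi_1(u^k)\cdot 1$, so the lemma reduces to producing, for each $n\ge 2$, a unitary $u_n\in(M_1)_{\varphi_1}$ with $\varphi_1(u_n^k)=0$ for $1\le k\le n-1$.

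Given $n$ mutually orthogonal projections $p_1,\dots,p_n\in(M_1)_{\varphi_1}$ summing to $1$ with $\varphi_1(p_j)=1/n$, one sets $u_n:=\sum_{j=1}^n e^{2\pi ij/n}p_j$; then $u_n^k=\sum_j e^{2\pi ijk/n}p_j$ and the standard discrete Fourier identity gives $\varphi_1(u_n^k)=n^{-1}\sum_{j=1}^n e^{2\pi ijk/n}=0$ for $1\le k\le n-1$. The main obstacle is therefore to secure the existence of these equidistributed projections, which amounts to arranging that $(M_1)_{\varphi_1}$ is sufficiently diffuse; note that a type III$_1$ factor may a priori carry faithful normal states with trivial centralizer. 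This is where the structural dichotomy (a)/(b) and the state-changing trick recalled at the start of \S 3 are essential: they allow replacement of $(M_1,\varphi_1)$ by a pair whose centralizer contains, for every $n$, a partition of unity into projections of equal $\varphi_1$-mass $1/n$, without altering the $\tau$-invariant or the fullness question under investigation.
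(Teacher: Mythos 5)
Your argument works cleanly for case (a) of the structural dichotomy recalled at the start of Section 3, where $M_1$ is diffuse with no type III$_1$ direct summands: there $(M_1)_{\varphi_1}$ is indeed diffuse, the identity $\tau_\mathbb{Z}\circ E_{\varphi_1} = \varphi_1\circ E_{M_1}$ holds as you say, and a unitary with vanishing nontrivial moments exists (the paper in fact takes a single Haar unitary $v$ and puts $u_n := v\otimes 1$ for all $n$, which is a cleaner version of your equidistributed-projection construction). So far this matches the paper.

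However, in case (b), where $M_1$ is a type III$_1$ factor, there is a genuine gap that your last paragraph does not repair. You correctly observe that $(M_1)_{\varphi_1}$ may be very small (even trivial), but the ``state-changing trick'' of \cite[\S\S2.1]{Ueda:arXiv:1207.6838v3} is not the remedy: that trick produces the dichotomy (a)/(b) for the \emph{algebra} $M_1$, it does not license replacing $\varphi_1$ by a state with a large centralizer, since the free product $(M,\varphi)=(M_1,\varphi_1)\star(M_2,\varphi_2)$ and hence the crossed product under study depend on the given states. Moreover, you committed to $\psi=\tau_\mathbb{Z}$ at the outset, whereas for case (b) the tracial choice is precisely the wrong one. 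The paper's proof in case (b) takes an entirely different route: it does not search for unitaries inside $(M_1)_{\varphi_1}$ at all. Instead it picks a positive invertible $h\in\mathbb{C}1\rtimes_{\sigma_T^{\varphi_1}}\mathbb{Z}$ with $u^*=h^{iT}$ and sets $\psi:=\tau_\mathbb{Z}(h)^{-1}\tau_\mathbb{Z}(h\,\cdot\,)$, arranging $\sigma_T^{\psi\circ E_{\varphi_1}}=\Id$. Then \cite[Theorem 4.2.6]{Connes:ASENS73} shows $(M_1\rtimes_{\sigma_T^{\varphi_1}}\mathbb{Z})_{\psi\circ E_{\varphi_1}}$ is a type II$_1$ factor containing the diffuse abelian algebra $\mathbb{C}1\rtimes_{\sigma_T^{\varphi_1}}\mathbb{Z}$; one takes $n$ equivalent orthogonal projections $e_0,\dots,e_{n-1}$ \emph{in that abelian subalgebra} summing to $1$, and builds $u_n$ as the cyclic shift. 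The condition $E_{\varphi_1}(u_n^k)=0$ then comes from the $E_{\varphi_1}$-bimodularity together with commutativity of $\mathbb{C}1\rtimes_{\sigma_T^{\varphi_1}}\mathbb{Z}$ (since $u_n^k e_i = e_{i+k \bmod n} u_n^k$ while $E_{\varphi_1}(u_n^k)$ would have to commute with each $e_i$), not from a discrete Fourier identity. Without this case-(b) argument your proposal is incomplete.
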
   
\begin{proof} 
We first treat case (a). It is easy to see that $(M_1)_{\varphi_1}$ is diffuse; see the proof of \cite[Theorem 3.4]{Ueda:AdvMath11} with standard facts (see e.g.~\cite[Lemma 11, Lemma 12]{Ueda:MathScand01}). Let $E_{M_1}\colon M_1\rtimes_{\sigma_T^{\varphi_1}}\mathbb{Z} \to M_1$ be the canonical conditional expectation. One can easily confirm $\tau_\mathbb{Z}\circ E_{\varphi_1} = \varphi_1\circ E_{M_1}$ with the canonical tracial state $\tau_\mathbb{Z}$ on $L(\mathbb{Z}) = \mathbb{C}1\rtimes_{\sigma_T^{\varphi_1}}\mathbb{Z}$ naturally, and thus $(M_1)_{\varphi_1}\botimes L(\mathbb{Z})$ naturally sits in $(M_1\rtimes_{\sigma_T^{\varphi_1}}\mathbb{Z})_{\tau_\mathbb{Z}\circ E_{\varphi_1}}$. Let $v \in (M_1)_{\varphi_1}$ be a Haar unitary with respect to $\varphi_1$ (see e.g.~the proof of \cite[Theorem 3.7]{Ueda:AdvMath11} for its existence), and $\psi := \tau_\mathbb{Z}$ and $u_n := v\otimes1$ (for every $n$) are the desired ones. 

We then treat case (b). Let us denote by $u \in \mathbb{C}1\rtimes_{\sigma_T^{\varphi_1}}\mathbb{Z}$ the canonical unitary generator. By a standard fact (see e.g.~\cite[Theorem X.1.17]{Takesaki:Book2}) together with the identity $\tau_\mathbb{Z}\circ E_{\varphi_1} = \varphi_1\circ E_{M_1}$ we observe that $\sigma_T^{\tau_\mathbb{Z}\circ E_\varphi} = \Ad u$. One can choose a positive invertible $h \in \mathbb{C}1\rtimes_{\sigma_T^{\varphi_1}}\mathbb{Z}$ so that $u^* = h^{iT}$. Set $\psi := \tau_{\mathbb{Z}}(h)^{-1}\tau_\mathbb{Z}(h\,-\,)$, and one has $\sigma_T^{\psi\circ E_{\varphi_1}} = \Id_{M_1\rtimes_{\sigma_T^{\varphi_1}}\mathbb{Z}}$.
By \cite[Theorem 4.2.6]{Connes:ASENS73} $(M_1\rtimes_{\sigma_T^{\varphi_1}}\mathbb{Z})_{\psi\circ E_\varphi}$ must be a type II$_1$ factor and contain $\mathbb{C}1\rtimes_{\sigma_T^{\varphi_1}}\mathbb{Z}$. Since $\mathbb{C}1\rtimes_{\sigma_T^{\varphi_1}}\mathbb{Z}$ is diffuse, for each natural number $n \geq 2$ there exist $n$ orthogonal $e_0,\dots,e_{n-1} \in (\mathbb{C}1\rtimes_{\sigma_T^{\varphi_1}}\mathbb{Z})^p$, all of which are equivalent in $(M_1\rtimes_{\sigma_T^{\varphi_1}}\mathbb{Z})_{\psi\circ E_\varphi}$, and $\sum_{i=0}^{n-1}e_i = 1$. Then one can construct a unitary $u_n \in (M_1\rtimes_{\sigma_T^{\varphi_1}}\mathbb{Z})_{\psi\circ E_\varphi}$ in such a way that $u_n e_0 = e_1 u_n$, $u_n e_1 = e_2 u_n$, $\dots, u_n e_{n-1} = e_0 u_n$. Since $\mathbb{C}1\rtimes_{\sigma_T^{\varphi_1}}\mathbb{Z}$ is commutative, one has $E_{\varphi_1}(u_n^k) = 0$ for every $1 \leq k \leq n-1$. 
\end{proof} 

\begin{lemma}\label{L8} We have $(M\rtimes_{\sigma_T^\varphi}\mathbb{Z})_\omega = (M\rtimes_{\sigma_T^\varphi}\mathbb{Z})' \cap (M\rtimes_{\sigma_T^\varphi}\mathbb{Z})^\omega = M' \cap (\mathbb{C}1\rtimes_{\sigma_T^\varphi}\mathbb{Z})^\omega$, where $M$ canonically sits in $M\rtimes_{\sigma_T^\varphi}\mathbb{Z}$. 
\end{lemma}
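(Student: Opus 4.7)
Abbreviate $N := M\rtimes_{\sigma_T^\varphi}\mathbb{Z}$, let $u$ be the canonical unitary, and put $\Phi := \varphi\circ E_M$ for the canonical conditional expectation $E_M\colon N\to M$. A dual-weight calculation shows that $\sigma^\Phi$ acts on $M$ as $\sigma^\varphi$ and fixes $u$, so $L(\mathbb{Z}) := \mathbb{C}1\rtimes_{\sigma_T^\varphi}\mathbb{Z}$ sits in the centralizer $N_\Phi$, and the unique $\Phi$-preserving conditional expectation $E_{L(\mathbb{Z})}\colon N\to L(\mathbb{Z})$ is given by $E_{L(\mathbb{Z})}(au^k) = \varphi(a)u^k$ for $a\in M$, $k\in\mathbb{Z}$. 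The plan is to close the chain
\[
N_\omega \;\subseteq\; N'\cap N^\omega \;\subseteq\; M'\cap L(\mathbb{Z})^\omega \;\subseteq\; N_\omega.
\]
The first inclusion is standard. For the last, any $x\in M'\cap L(\mathbb{Z})^\omega$ admits a representing sequence in $L(\mathbb{Z})\subseteq N_\Phi$, which centralizes $\Phi$ exactly and commutes with $L(\mathbb{Z})$ by abelianness; combined with $x\in M'$, a routine Ando--Haagerup-style argument upgrades the centralization of $\Phi$ to every normal functional on $N$, yielding $x\in N_\omega$.

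The content of the lemma is the middle inclusion. Given $x=(x_n)\in N'\cap N^\omega$, the Fourier expansion $x_n=\sum_k x_n^{(k)}u^k$ with $x_n^{(k)} := E_M(x_nu^{-k})$ together with the Parseval identity $\|z\|_\Phi^2 = \sum_k\|z^{(k)}\|_\varphi^2$ converts the asymptotic commutation of $x$ with $a\in M$ and with $u$ into the coefficient-wise twisted intertwining $aX^{(k)} = X^{(k)}\sigma_{Tk}^\varphi(a)$ and the invariance $\sigma_T^\varphi(X^{(k)}) = X^{(k)}$ in $M^\omega$, for every $k\in\mathbb{Z}$, where $X^{(k)}:=(x_n^{(k)})_n$. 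Setting $y:=x-E_{L(\mathbb{Z})}^\omega(x)\in N^\omega$ and $y_n:=x_n-E_{L(\mathbb{Z})}(x_n)\in\ker E_{L(\mathbb{Z})}$, the problem reduces to showing $\lim_{n\to\omega}\|y_n\|_\Phi = 0$. This does not follow from the coefficient-wise identities alone, since the Fourier tail of $y_n$ could a priori carry $\Phi$-mass escaping to infinity with~$n$.

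To rule this out we argue by contradiction, using the unitaries $U_n\in(M_1\rtimes_{\sigma_T^{\varphi_1}}\mathbb{Z})_{\psi\circ E_{\varphi_1}}\subseteq N$ from Lemma~\ref{L7}, which satisfy the crucial orthogonality $E_{\varphi_1}(U_n^k) = 0$ for $1\le|k|\le n-1$ (equivalently, $U_n^k\in\ker E_{L(\mathbb{Z})}$, since $E_{L(\mathbb{Z})}$ restricts to $E_{\varphi_1}$ on $M_1\rtimes\mathbb{Z}$). Assume, for contradiction, that $S:=\{n:\|y_n\|_\Phi > \epsilon\}\in\omega$ for some $\epsilon > 0$. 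Since $x\in N'\cap N^\omega$, the set $A_n:=\{m:\|[U_n,y_m]\|_\Phi < 1/n^2\}$ belongs to $\omega$ for every $n$, so we may pick $m_n\in S\cap A_n\cap\{m\ge n\}$; then $\|y_{m_n}\|_\Phi > \epsilon$ and $\|[U_n,y_{m_n}]\|_\Phi < 1/n^2$. Form the cyclic averages
\[
z_n \;:=\; \frac{1}{n}\sum_{k=0}^{n-1} U_n^k\,y_{m_n}\,U_n^{-k}.
\]
Since $U_n\in N_\Phi$, a direct telescoping gives $\|z_n - y_{m_n}\|_\Phi\le \frac{n-1}{2}\|[U_n,y_{m_n}]\|_\Phi < \frac{1}{2n}\to 0$, while expanding the inner product yields
\[
\|z_n\|_\Phi^2 \;=\; \frac{1}{n}\|y_{m_n}\|_\Phi^2 \;+\; \frac{1}{n^2}\!\!\sum_{1\le|j|\le n-1}\!\!(n-|j|)\,\Phi\bigl(y_{m_n}^{\,*}\,U_n^j\,y_{m_n}\,U_n^{-j}\bigr).
\]
The decisive step, and the main obstacle of the proof, is to show that the off-diagonal terms $\Phi(y_{m_n}^{\,*}U_n^j y_{m_n} U_n^{-j})$ vanish (or are asymptotically negligible) for $1\le|j|\le n-1$; this is where one invokes, in the spirit of \cite[Theorem~4.1]{Ueda:AdvMath11} and \cite[\S\S 2.1]{Ueda:arXiv:1207.6838v3}, the combination of (i) $U_n^j\in\ker E_{L(\mathbb{Z})}$, (ii) $y_{m_n}\in\ker E_{L(\mathbb{Z})}$, and (iii) the alternating freeness of $M_1$ and $M_2$ in $M = M_1\star M_2$. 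Granting this, $\|z_n\|_\Phi^2 = O(1/n)\to 0$, hence $\|y_{m_n}\|_\Phi\to 0$, contradicting $\|y_{m_n}\|_\Phi > \epsilon$. Therefore $y=0$ in $N^\omega$, so $x = E_{L(\mathbb{Z})}^\omega(x)\in L(\mathbb{Z})^\omega$, and with $x\in M'$ we conclude $x\in M'\cap L(\mathbb{Z})^\omega$.
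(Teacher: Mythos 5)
Your reduction to the middle inclusion $N'\cap N^\omega\subseteq M'\cap L(\mathbb{Z})^\omega$ (with $N:=M\rtimes_{\sigma_T^\varphi}\mathbb{Z}$) and the choice $y := x - E_{L(\mathbb{Z})}^\omega(x)$ are reasonable, but the cyclic-averaging step that is supposed to force $y=0$ cannot work, and you have correctly flagged the weak point yourself. The two properties you try to exploit simultaneously pull in opposite directions. You select $m_n$ so that $\|[U_n,y_{m_n}]\|_\Phi < 1/n^2$; by exactly the same telescoping estimate you use for $\|z_n - y_{m_n}\|_\Phi$, this gives $\|U_n^j y_{m_n}U_n^{-j} - y_{m_n}\|_\Phi < 1/n$ for $1\le|j|\le n-1$, hence $\Phi\bigl(y_{m_n}^*U_n^j y_{m_n}U_n^{-j}\bigr) = \|y_{m_n}\|_\Phi^2 + O(1/n)$ uniformly in $j$. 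Substituting into your formula yields $\|z_n\|_\Phi^2 = \|y_{m_n}\|_\Phi^2 + O(1/n)$, not $O(1/n)$ --- perfectly consistent with the telescoping bound and with no contradiction to extract. Asymptotic commutation with $U_n$ is precisely what defeats the averaging: conjugation by $U_n^j$ does not move $y_{m_n}$, so the average does not shrink. Freeness cannot rescue the off-diagonal terms either, since $U_n$ lives in $M_1\rtimes_{\sigma_T^{\varphi_1}}\mathbb{Z}$ and nothing prevents $y_{m_n}$ from having components in that same subalgebra; $y_{m_n}^*U_n^j y_{m_n}U_n^{-j}$ need not be an alternating word. (A secondary issue: in case (b) of Lemma \ref{L7} the state $\psi$ is not $\tau_\mathbb{Z}$, so $U_n\in N_\Phi$ with $\Phi=\varphi\circ E_M$ is not automatic; one must work with $\psi\circ E_{L(\mathbb{Z})}$ instead.)

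The paper's proof is structurally different and avoids this tension. It realizes $N$ as the amalgamated free product $(M_1\rtimes_{\sigma_T^{\varphi_1}}\mathbb{Z})\star_{L(\mathbb{Z})}(M_2\rtimes_{\sigma_T^{\varphi_2}}\mathbb{Z})$, applies \cite[Proposition 3.5]{Ueda:JLMS13} --- whose hypothesis is exactly what Lemma \ref{L7} supplies --- to obtain the inclusion $N'\cap N^\omega\subseteq (M_1\rtimes_{\sigma_T^{\varphi_1}}\mathbb{Z})^\omega$, and then performs a second, separate step to land in $L(\mathbb{Z})^\omega$: pick a fixed invertible $y\in\Ker(\varphi_2)$, combine $xy=yx$ with free independence of $(M_1\rtimes_{\sigma_T^{\varphi_1}}\mathbb{Z})^\omega$ and $(M_2\rtimes_{\sigma_T^{\varphi_2}}\mathbb{Z})^\omega$ over $L(\mathbb{Z})^\omega$ (via \cite[Proposition 4]{Ueda:TAMS03}) to force $y\bigl(x-E_\varphi^\omega(x)\bigr)=0$, then invert $y$. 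Your single averaging argument cannot substitute for either of these two steps; in particular the orthogonality $E_{\varphi_1}(u_n^k)=0$ is used there in a genuinely different way than as a decay mechanism for a cyclic average over conjugates.
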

\begin{proof} Similarly to \cite[Theorem 5.1]{Ueda:PacificJMath99} we have 
\[
(M\rtimes_{\sigma_T^\varphi}\mathbb{Z},E_\varphi) = (M_1\rtimes_{\sigma_T^{\varphi_1}}\mathbb{Z},E_{\varphi_1})\star_{\mathbb{C}1\rtimes_{\sigma_T^\varphi}\mathbb{Z}}(M_2\rtimes_{\sigma_T^{\varphi_2}}\mathbb{Z},E_{\varphi_2})  
\]
naturally, to which \cite[Proposition 3.5]{Ueda:JLMS13} is applicable thanks to Lemma \ref{L7}. Since $M_2$ is non-trivial, one can choose an invertible $y \in \Ker(\varphi_2)$ so that $E_{\varphi_2}(y^* y) = \varphi_2(y^* y)1 \neq 0$. Therefore, \cite[Proposition 3.5]{Ueda:JLMS13} actually says that $(M\rtimes_{\sigma_T^\varphi}\mathbb{Z})_\omega \subseteq (M\rtimes_{\sigma_T^\varphi}\mathbb{Z})'\cap(M\rtimes_{\sigma_T^\varphi}\mathbb{Z})^\omega \subseteq (M_1\rtimes_{\sigma_T^{\varphi_1}}\mathbb{Z})^\omega$. For any $x \in (M_2\rtimes_{\sigma_T^{\varphi_2}}\mathbb{Z})' \cap (M_1\rtimes_{\sigma_T^{\varphi_1}}\mathbb{Z})^\omega$ one has $y(x-E_\varphi^\omega(x)) + yE_\varphi^\omega(x) = yx = xy = E_\varphi^\omega(x)y + (x-E_\varphi^\omega(x))y$, and the free independence of $(M_1\rtimes_{\sigma_T^{\varphi_1}}\mathbb{Z})^\omega$ and $(M_2\rtimes_{\sigma_T^{\varphi_2}}\mathbb{Z})^\omega$ in $((M\rtimes_{\sigma_T^\varphi}\mathbb{Z})^\omega, E_\varphi^\omega)$ (see \cite[Proposition 4]{Ueda:TAMS03}) forces at least $y(x-E_1^\omega(x)) = 0$; implying $x = E_\varphi^\omega(x) \in (\mathbb{C}1\rtimes_{\sigma_T^\varphi}\mathbb{Z})^\omega$ thanks to the invertibility of $y$. Consequently, $(M\rtimes_{\sigma_T^\varphi}\mathbb{Z})_\omega \subseteq (\mathbb{C}1\rtimes_{\sigma_T^\varphi}\mathbb{Z})^\omega$, from which the desired assertion immediately follows. 
\end{proof} 

We are ready to prove the desired assertion. 

\medskip\noindent
{\bf Proof of Theorem 1 (2) $\Rightarrow$ (1):} We prove its contraposition. Namely, assume that $\widetilde{M}$ is not full. Lemma \ref{L6} together with \cite[Theorem XIV.3.8, Theorem XIV.4.7]{Takesaki:Book2} says that $(M\rtimes_{\sigma_T^\varphi}\mathbb{Z})_\omega \neq \mathbb{C}1$. 

\medskip\noindent
{\bf Claim:} There exists a sequence $k_l$ in $\mathbb{Z}\setminus\{0\}$ such that $\sigma_{k_l T}^\varphi \longrightarrow \Id_M$ in $\Aut(M)$ as $l\to\infty$, or equivalently, that $\Vert x - \sigma_{k_l T}^\varphi(x)\Vert_\varphi \longrightarrow 0$ as $l\to\infty$ for every $x \in M$ (since $\sigma_t^\varphi$ preserves $\varphi$; see \cite[Theorem IX.1.15, Proposition IX.1.17]{Takesaki:Book2}). 

\medskip\noindent
(Proof of Claim) On the contrary, suppose that there exist $\varepsilon>0$ and a finite subset $\mathfrak{F}$ of $M$ such that $\sum_{y \in \mathfrak{F}} \Vert y - \sigma_{mT}^\varphi(y)\Vert_\varphi^2 \geq \varepsilon$ as long as $m\neq0$. Let $x \in (M\rtimes_{\sigma_T^\varphi}\mathbb{Z})_\omega$ be arbitrarily chosen with a representing sequence $x_n$. Lemma \ref{L8} shows that $x$ falls in $(\mathbb{C}1\rtimes_{\sigma_T^\varphi}\mathbb{Z})^\omega$ so that we can approximate each $x_n$ in the $\sigma$-strong topology by a bounded net consisting of finite linear combinations of the form $\sum_m c_m u^m$ with scalars $c_m$ (see the proof of Lemma \ref{L7} for the symbol `$u$'). We have
\begin{align*} 
\big\Vert \big(\sum_m c_m u^m\big) &- \tau_\mathbb{Z}\big(\sum_m c_m u^m\big)1\big\Vert_{\tau_\mathbb{Z}}^2 
= 
\sum_{m\neq0} |c_m|^2 \\
&\leq 
\varepsilon^{-1} \sum_{m\neq0} |c_m|^2 \sum_{y \in \mathfrak{F}} \Vert y - \sigma_{mT}^\varphi(y)\Vert_\varphi^2 \\
&\leq  
\varepsilon^{-1}\sum_{y\in\mathfrak{F}}\big\Vert\sum_m c_m(y - \sigma_{mT}^\varphi(y))u^m\big\Vert_{\varphi\circ E}^2 \\
&= 
\varepsilon^{-1} \sum_{y\in\mathfrak{F}} \big\Vert y\big(\sum_m c_m u^m\big) - \big(\sum_m c_m u^m\big)y\big\Vert_{\varphi\circ E}^2.   
\end{align*}
It follows that $\Vert x_n - \tau_\mathbb{Z}(x_n)1\Vert_{\tau_\mathbb{Z}}^2 \leq \varepsilon^{-1} \sum_{y\in\mathfrak{F}} \Vert yx_n -  x_n y\big\Vert_{\varphi\circ E}^2$ for every $n$. Taking the limit of this inequality as $n\to\omega$ we get 
\[
0 \leq \Vert x - \tau_{\mathbb{Z}}^\omega(x)1\Vert_{\tau_\mathbb{Z}^\omega} \leq \varepsilon^{-1} \sum_{y\in\mathfrak{F}} \Vert y x - x y \Vert_{(\varphi\circ E)^\omega}^2 = 0;
\]
implying $x = \tau_{\mathbb{Z}}^\omega(x)1$, a contradiction to $(M\rtimes_{\sigma_T^\varphi}\mathbb{Z})_\omega \neq \mathbb{C}1$. Hence we have proved the claim. 

\medskip
Since $|k_l T| \geq T > 0$ for all $l$, the sequence $k_l T$ in the claim does never converge to $0$ in the usual topology on $\mathbb{R}$. Nevertheless, $\sigma_{k_l T}^\varphi \longrightarrow \Id_M$ in $\Aut(M)$ as $l \to \infty$. These clearly contradict condition (2). Hence we are done. \qed

\bigskip
Here are quick proofs of Theorem \ref{T1} (1) $\Rightarrow$ (2) and (2) $\Leftrightarrow$ (3) for the sake of completeness. Remark that the former can also be derived as a consequence of a more general fact \cite[Corollary 3.4]{Shlyakhtenko:TAMS04}.

\medskip\noindent
{\bf Proof of Theorem 1 (1) $\Rightarrow$ (2):} 
Suppose, on the contrary, that there exists a sequence $t_n$ of real numbers so that $\sigma_{t_n}^{\varphi_c} \longrightarrow \Id_{M_c}$ in $\Aut(M_c)$ as $n\to\infty$ but $t_n$ does not converge to $0$ in the usual topology as $n\to\infty$. Let $\lambda^{\varphi_c} \colon \mathbb{R} \to \widetilde{M_c} = M_c\rtimes_{\sigma^{\varphi_c}}\mathbb{R}$ be the canonical unitary representation. Passing to a subsequence, we may assume that there is a positive constant $\varepsilon > 0$ so that $|t_n| \geq 3\varepsilon$ for all $n$. Then the regular representation $\lambda \colon \mathbb{R} \curvearrowright L^2(\mathbb{R})$ enjoys that $\Vert\lambda(t_n)\chi_{[-\varepsilon,\varepsilon]}-\zeta\chi_{[-\varepsilon,\varepsilon]}\Vert_2^2 \geq 2\varepsilon$ for all $n$ and all $\zeta \in \mathbb{C}$. It follows that the sequence $\lambda^{\varphi_c}(t_n)$ does never define a scalar in $(\widetilde{M_c})^\omega$. Set $E_{\varphi_c}
:= (\varphi_c\botimes \Id)\!\upharpoonright_{\widetilde{M_c}}$, a positive scalar multiple of faithfull normal conditional expectation onto $\mathbb{C}1_{M_c}\rtimes_{\sigma^{\varphi_c}}\mathbb{R}$. With a faithful normal state $\psi$ on $\mathbb{C}1_{M_c}\rtimes_{\sigma^{\varphi_c}}\mathbb{R}$ we have $\Vert \lambda^{\varphi_c}(t_n)x - x\lambda^{\varphi_c}(t_n)\Vert_{\psi\circ E_{\varphi_c}} = \Vert \sigma_{t_n}^{\varphi_c}(x) - x\Vert_{\varphi_c} \longrightarrow 0$ as $n\to\infty$ for every $x \in M_c$ so that the sequence $\lambda^{\varphi_c}(t_n)$ defines a non-scalar element of $(\widetilde{M_c})'\cap(\widetilde{M_c})^\omega$, a contradiction. \qed   

\medskip\noindent
{\bf Proof of Theorem 1 (2) $\Leftrightarrow$ (3):} 
This follows from the equivalence between $\sigma_{t_n}^\varphi \longrightarrow \Id_M$ in $\Aut(M)$ as $n\to\infty$ and $(\sigma_{t_n}^{\varphi_1},\sigma_{t_n}^{\varphi_2}) \longrightarrow (\Id_{M_1},\Id_{M_2})$ in $\Aut(M_1)\times\Aut(M_2)$ as $n \to\infty$. Firstly, the linear span of the identity $1$ and all alternating words in $\Ker(\varphi_k)$, $k=1,2$, forms a dense subspace of the standard form $L^2(M)$ which can be understood as the completion of $M$ with respect to the norm $\Vert\,-\,\Vert_\varphi$. Secondly, the free independence of $M_1,M_2$ with respect to $\varphi$ together with the formula $\sigma_t^\varphi = \sigma_t^{\varphi_1} \star \sigma_t^{\varphi_2}$ (see \cite{Barnett:PAMS95},\cite{Dykema:Crelle94}) enables us to see that $\Vert \sigma_{t_n}^\varphi(x) - x\Vert_\varphi \leq (\max_{1\leq i\leq l}\Vert x_i\Vert_\infty)^{l-1}\sum_{i=1}^l \Vert \sigma_{t_n}^{\varphi_{k_i}}(x_i)-x_i\Vert_{\varphi_{k_i}}$ for every alternating word $x = x_1\cdots x_l$ with $x_i \in \Ker(\varphi_{k_i})$. The equivalence is immediate from these facts (thanks to \cite[Theorem IX.1.15, Proposition IX.1.17]{Takesaki:Book2}). \qed 

\medskip
In closing of this section we give a simple remark explaining Corollary \ref{C2}. 

\begin{remark}\label{R9}
The corollary is indeed a particular case of Theorem \ref{T1}, since {\it any free Araki--Woods factor $\Gamma(\mathcal{H}_\mathbb{R},U_t)''$ with its distinguished state $\varphi_U$ can be written as a free product of two non-trivial von Neumann algebras} (see the proof of \cite[Theorem 2.7]{Vaes:Bourbaki05} for a related claim). This should be known by experts, but we do give an explanation about this for the reader's convenience.

Let $U_t = \exp(\sqrt{-1}tA)$ with $A = \int_{-\infty}^{+\infty} s\,E_A(ds)$ be the Stone representation on the complexification $\mathcal{H}_\mathbb{R}+\sqrt{-1}\mathcal{H}_\mathbb{R}$. The unitary conjugation $J \colon \xi + \sqrt{-1}\eta \mapsto \xi - \sqrt{-1}\eta$ for $\xi + \sqrt{-1}\eta \in \mathcal{H}_\mathbb{R}+\sqrt{-1}\mathcal{H}_\mathbb{R}$ enjoys the property that $JE_A(B)J = E_A(-B)$ for each Borel subset $B$ of $\mathbb{R}$. This shows that, if $\sharp(\Sp(A) \cap (0,+\infty)) \leq 1$, then $\Sp(A)$ must be either $\{0\}$, $\{-s,s\}$ or $\{-s,0,s\}$ with $s > 0$; hence the desired free product decomposition is obtained in the proof of \cite[Theorem 6.1]{Shlyakhtenko:PacificJMath97}. If $\sharp(\Sp(A) \cap (0,+\infty)) \geq 2$, then $\Sp(A) \cap (0,+\infty)$ is decomposed into two non-trivial Borel subsets $B_1, B_2$. Set $P_1:= E_A(-B_1 \cup\{0\}\cup B_1)$, $P_2 := E_A(-B_2\cup B_2)$, both of which commute with $U_t$ and $J$. Thus we have $(\mathcal{H}_\mathbb{R},U_t) = (P_1\mathcal{H}_\mathbb{R},U_t\!\upharpoonright_{P_1\mathcal{H}_\mathbb{R}})\oplus(P_2\mathcal{H}_\mathbb{R},U_t\!\upharpoonright_{P_2\mathcal{H}_\mathbb{R}})$ so that $\Gamma(\mathcal{H}_\mathbb{R},U_t)''$ becomes a free product of two free Araki--Woods factors (see \cite[Theorem 2.11]{Shlyakhtenko:PacificJMath97}).

In this way, almost all general results on free Araki--Woods factors follow, as particular cases, from the corresponding ones on free product von Neumann algebras; see \cite{Ueda:AdvMath11,Ueda:MRL11,Ueda:arXiv:1207.6838v3}. Only two non-trivial facts \cite[Theorem 5.4]{Shlyakhtenko:PacificJMath97},\cite[Theorem 4.8]{Shlyakhtenko:Crelle98}, both of which heavily depend upon `matricial models', have not been re-proved in the general framework of free product von Neumann algebras. These lacks seem to be related to the question \cite[\S5.4]{Ueda:AdvMath11}.
\end{remark} 

\section{Appendix: Bernoulli crossed products} 

Throughout this section, we follow the notation rule, etc.~in \cite[\S\S2.5]{VaesVerraedt:arXiv:1408.6414}, differently from the other sections. Let $\Lambda$ be a countable group acting on a countable set $I$ such that {\it $\Lambda \curvearrowright I$ has no invariant mean}, and $(P,\phi)$ be a non-trivial von Neumann algebra equipped with a faithful normal state $\phi$. Let $(P,\phi)^I\rtimes\Lambda$ (or $P^I\rtimes\Lambda$ for short) be the (generalized) Bernoulli crossed product, see e.g.~\cite[\S\S2.5]{VaesVerraedt:arXiv:1408.6414}. Set $\varphi := \phi^I\circ E_{P^I}$ with the canonical conditional expectation $E_{P^I} \colon P^I\rtimes\Lambda \to P^I$.  

\begin{lemma}\label{L10} For every countable subgroup $G$ of $\mathbb{R}$, any central sequence {\rm(}see \cite[Definition XIV.3.2]{Takesaki:Book3}{\rm)} in $(P^I\rtimes\Lambda)\rtimes_{\sigma^\varphi}G$  is equivalent to an {\rm(}operator norm-{\rm)}bounded one in $(\mathbb{C}1\rtimes\Lambda)\rtimes_{\sigma^\varphi}G = \mathbb{C}1\botimes L(\Lambda\times G)$. 
\end{lemma}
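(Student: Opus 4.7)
Denote $N := (P^I \rtimes \Lambda) \rtimes_{\sigma^\varphi} G$, $N_0 := \mathbb{C}1 \botimes L(\Lambda \times G)$, and let $E \colon N \to N_0$ be the canonical normal conditional expectation induced by $\phi^I$. (It exists because the Bernoulli $\Lambda$-action preserves $\phi^I$, whence $\sigma^\varphi$ acts trivially on $L(\Lambda)$, and $L(\Lambda)$, $L(G)$ commute in $N$ so that they jointly generate $N_0$.) Fix a faithful normal state $\psi_0$ on $N_0$ and set $\psi := \psi_0\circ E$, a faithful normal state on $N$. The plan is to show that every central sequence $(x_n)\subset N$ with $\|x_n\|_\infty\le 1$ satisfies $\|x_n - E(x_n)\|_\psi \to 0$; this identifies $(x_n)$ with the bounded sequence $(E(x_n))\subset N_0$, as required.

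I would argue via Fourier decomposition over the discrete abelian group $\Lambda\times G$. Writing $a_n^{(\lambda,g)} := E_{P^I}(x_n v_g^* u_\lambda^*)\in P^I$, we have $x_n=\sum_{(\lambda,g)}a_n^{(\lambda,g)}u_\lambda v_g$ and $E(x_n)=\sum_{(\lambda,g)}\phi^I(a_n^{(\lambda,g)})u_\lambda v_g$, so it suffices to control each $a_n^{(\lambda,g)} - \phi^I(a_n^{(\lambda,g)})1$ in $L^2(P^I,\phi^I)$ uniformly over the Fourier index. The three central conditions $[v_h,x_n]\to 0$, $[u_\mu,x_n]\to 0$, and $[b,x_n]\to 0$ (for $h\in G$, $\mu\in\Lambda$, $b\in P^I$) translate, after Fourier analysis, into the relations $\sigma_h^{\phi^I}(a_n^{(\lambda,g)})-a_n^{(\lambda,g)}\to 0$, $\sigma_\mu(a_n^{(\mu^{-1}\lambda\mu,g)})-a_n^{(\lambda,g)}\to 0$, and $b\,a_n^{(\lambda,g)}-a_n^{(\lambda,g)}\sigma_\lambda\sigma_g^{\phi^I}(b)\to 0$ in $L^2(P^I)$.

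For $(\lambda,g)=(e,g)$, taking $\lambda = e$ in the second relation (so $\mu^{-1}\lambda\mu=e$) shows that $a_n^{(e,g)}$ is asymptotically fixed by the generalized Bernoulli shift $\sigma\colon\Lambda\curvearrowright P^I$. Since $\Lambda\curvearrowright I$ has no invariant mean, the associated Koopman representation on $L^2(P^I,\phi^I)\ominus\mathbb{C}$ has spectral gap: there exist $\mu_1,\dots,\mu_k\in\Lambda$ and $c>0$ with $\sum_j\|\sigma_{\mu_j}\xi-\xi\|^2\ge c\|\xi\|^2$ for every such $\xi$. This forces $a_n^{(e,g)}-\phi^I(a_n^{(e,g)})1\to 0$ in $L^2$. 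For $(\lambda,g)$ with $\lambda\ne e$, the third relation is the asymptotic intertwining identity $b\,a_n^{(\lambda,g)}\approx a_n^{(\lambda,g)}\sigma_\lambda\sigma_g^{\phi^I}(b)$, and because $\sigma_\lambda$ is a nontrivial Bernoulli shift moving infinitely many tensor factors, a Popa-style intertwining-by-bimodules argument (of the kind used in \cite[\S\S2.5]{VaesVerraedt:arXiv:1408.6414}) forces $a_n^{(\lambda,g)}\to 0$ in $L^2$; in particular $a_n^{(\lambda,g)}-\phi^I(a_n^{(\lambda,g)})1\to 0$.

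The main obstacle I expect is obtaining uniform (summable) control across all Fourier slots $(\lambda,g)$: the individual arguments above work slot-by-slot, but to conclude $\|x_n-E(x_n)\|_\psi\to 0$ one needs a single spectral-gap estimate applied globally. The cleanest route is to upgrade the Koopman-level spectral gap to a bimodule spectral gap for the $N_0$-bimodule $L^2(N)\ominus L^2(N_0)$, namely $\sum_j\|u_{\mu_j}\eta-\eta u_{\mu_j}\|_\psi^2\ge c'\|\eta\|_\psi^2$ for every $\eta$ in that complement, via essentially the Fourier analysis above and the fact that $L(G)$ commutes with both $L(\Lambda)$ and the Bernoulli shift. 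Once that bimodule inequality is in hand, the conclusion is immediate: by centrality and the fact that $E$ is a normal $N_0$-bimodule map of norm one, $\|[u_{\mu_j},x_n-E(x_n)]\|_\psi\to 0$ for each $j$, so the bimodule spectral gap gives $\|x_n-E(x_n)\|_\psi\to 0$.
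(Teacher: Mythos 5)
Your final argument---working with the $N_0$-bimodule $L^2(N)\ominus L^2(N_0)$, obtaining a spectral-gap inequality from the non-amenability of the Koopman representation of $\Lambda$ on $L^2(P^I,\phi^I)\ominus\mathbb{C}1$ (which holds because $\Lambda\curvearrowright I$ has no invariant mean), and applying it to $\eta_n=x_n-E(x_n)$---is exactly the paper's approach, which invokes \cite[Lemma 2.7]{VaesVerraedt:arXiv:1408.6414} and the fact that the tensor product of a non-amenable representation with an arbitrary one is non-amenable (\cite[Proposition 2.7]{Stokke:MathScand06}); the observation that the Bernoulli action commutes with $\sigma^{\phi^I}$, so that $E$ is well defined and $N_0$-bimodular, is also needed and you supply it. The preliminary Fourier-slot-by-slot discussion is a detour you rightly abandon: besides the uniformity problem you flag, the ``intertwining'' claim for $\lambda\neq e$ would need extra care (the assumption of no invariant mean does not guarantee that an individual $\lambda\neq e$ moves infinitely many indices, so $\sigma_\lambda$ need not be mixing), whereas the unified bimodule spectral-gap estimate bypasses this entirely.
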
 
\begin{proof} The idea used in the proof of \cite[Lemma 2.7]{VaesVerraedt:arXiv:1408.6414} works for proving this lemma. Let $x_n$ be a central sequence in $(P^I\rtimes\Lambda)\rtimes_{\sigma^\varphi}G$ . Consider those $x_n$ as vectors in the standard Hilbert space $L^2((P^I\rtimes\Lambda)\rtimes_{\sigma^\varphi}G) \cong [L^2((P,\phi)^I\ominus\,\mathbb{C})\botimes \ell^2(\Lambda)\botimes \ell^2(G)] \oplus [\ell^2(\Lambda)\botimes \ell^2(G)]$. Remark that this Hilbert space decomposition is given by the conditional expectation from $(P^I\rtimes\Lambda)\rtimes_{\sigma^\varphi}G$ onto $(\mathbb{C}1\rtimes\Lambda)\rtimes_{\sigma^\psi}G$ defined to be the restriction of $\phi^I\botimes \Id\botimes \Id$ to $(P^I\rtimes\Lambda)\rtimes_{\sigma^\varphi}G$ and also that the Bernoulli action commutes with the modular action associated with $\phi^I$. Hence, as in the proof of \cite[Lemma 2.7]{VaesVerraedt:arXiv:1408.6414} ({\it n.b.}~one of the keys there is that any tensor product representation of non-amenable one with arbitrary one must be non-amenable again; see e.g.~\cite[Proposition 2.7]{Stokke:MathScand06}), we see that the $x_n$ is equivalent to the $(\phi^I\botimes \Id\botimes \Id)(x_n)$ in $(\mathbb{C}1\rtimes\Lambda)\rtimes_{\sigma^\psi}G = \mathbb{C}1\botimes L(\Lambda\times G)$. Hence we are done.      
\end{proof} 

With the above lemma, one can prove the next proposition in the essentially same way as in the proof of Theorem \ref{T1} (1) $\Leftrightarrow$ (2).  

\begin{theorem}\label{T11} If $P^I\rtimes\Lambda$ is a full factor of type III$_1$, then the following conditions are equivalent{\rm:}
\begin{itemize} 
\item[(1)] The continuous core $(P^I\rtimes\Lambda)\rtimes_{\sigma^\varphi}\mathbb{R}$ of $P^I\rtimes\Lambda$ is a full factor. 
\item[(2)] The $\tau$-invariant $\tau(P^I\rtimes\Lambda)$, i.e., the weakest topology on $\mathbb{R}$ making the mapping $t \in \mathbb{R} \mapsto \sigma_t^\phi \in \Aut(P)$ continuous in this particular case, is the usual topology on $\mathbb{R}$. 
\end{itemize} 
\end{theorem}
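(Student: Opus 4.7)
\medskip\noindent
\textbf{Proof proposal.} I would run the two implications in parallel with the proof of Theorem~\ref{T1} (1) $\Leftrightarrow$ (2), with Lemma~\ref{L10} playing the role that Lemma~\ref{L8} played there. Before starting, I record the useful observation that $v_h \in M_\varphi$ for every $h\in\Lambda$: since $\varphi = \phi^I\circ E_{P^I}$ is $\Ad v_h$-invariant (the Bernoulli shift permutes tensor slots and $\phi^I$ is the product state), the cocycle $[D\varphi\circ\Ad v_h^{-1}:D\varphi]_t$ is trivial, hence $\sigma_t^\varphi(v_h)=v_h$. Consequently $\sigma_t^\varphi$ acts trivially on $L(\Lambda)$ and restricts to $(\sigma_t^\phi)^I$ on $P^I$, so $\sigma_{t_n}^\varphi\to\Id_M$ in $\Aut(M)$ if and only if $\sigma_{t_n}^\phi\to\Id_P$ in $\Aut(P)$.

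For (1) $\Rightarrow$ (2), I would copy the argument of Theorem~\ref{T1} (1) $\Rightarrow$ (2) verbatim: if the topology in (2) fails, choose $t_n$ with $|t_n|\ge3\varepsilon$ but $\sigma_{t_n}^\phi\to\Id_P$, and then $\sigma_{t_n}^\varphi\to\Id_M$; the regular-representation estimate shows that the sequence $\lambda^\varphi(t_n)$ is not asymptotically scalar, yet lies in the asymptotic centralizer of the continuous core, contradicting (1).

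For (2) $\Rightarrow$ (1) I would argue by contraposition. Fix $\lambda\in(0,1)$, set $T:=2\pi/|\log\lambda|$ and use Lemma~\ref{L6} (applicable because $P^I\rtimes\Lambda$ is assumed to be a full III$_1$ factor) to translate non-fullness of the continuous core into $Q_\omega \ne\mathbb{C}1$, where $Q:=(P^I\rtimes\Lambda)\rtimes_{\sigma_T^\varphi}\mathbb{Z}$. Apply Lemma~\ref{L10} with $G:=T\mathbb{Z}$ to realize a non-scalar central sequence $x_n$ inside $L(\Lambda\times T\mathbb{Z})$, whose canonical trace coincides with $\hat\varphi:=\varphi\circ E$ on that subalgebra (again because $v_g\in M_\varphi$). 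Now I prove the analog of the Claim in the proof of Theorem~\ref{T1}: the existence of $k_l\in\mathbb{Z}\setminus\{0\}$ with $\sigma_{k_lT}^\phi\to\Id_P$. Suppose the contrary: a mean-zero finite set $\mathfrak{F}\subset P$ and $\varepsilon>0$ satisfy $\sum_{y\in\mathfrak{F}}\|y-\sigma_{kT}^\phi(y)\|_\phi^2\ge\varepsilon$ for every $k\ne0$. Expanding $x_n=\sum_{g,k} c_{g,k}^{(n)}\,v_g u^k$ and computing
\[
[y_i,v_g u^k] = v_g\bigl[y_{g^{-1}\cdot i}-(\sigma_{kT}^\phi y)_i\bigr] u^k
\quad\text{for } y\in\mathfrak{F},\ i\in I,
\]
the orthogonality of the vectors $v_g u^k$ in $L^2(Q,\hat\varphi)$ yields
\[
\|[y_i,x_n]\|_{\hat\varphi}^2
=
2\|y\|_\phi^2\!\!\sum_{g\notin\Stab(i),k}\!|c_{g,k}^{(n)}|^2
+\!\!\sum_{g\in\Stab(i),k}\!|c_{g,k}^{(n)}|^2\,\|y-\sigma_{kT}^\phi(y)\|_\phi^2.
\]
Summing over $y\in\mathfrak{F}$ and over $i$ in a suitably chosen finite $\mathfrak{I}\subset I$ produces, via the Theorem~\ref{T1}-type inequality, a bound
$\|x_n-\tau(x_n)1\|_{\hat\varphi}^2 \le C(\mathfrak{F},\mathfrak{I})\sum_{y,i}\|[y_i,x_n]\|_{\hat\varphi}^2$,
which together with the centrality of $x_n$ forces $x_n$ to be asymptotically scalar, contradicting the choice from Lemma~\ref{L10}. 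Having established the claim, $k_lT$ is bounded below by $T>0$ in absolute value yet $\sigma_{k_lT}^\phi\to\Id_P$, contradicting (2).

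The main obstacle is the $\Lambda$-direction that was absent in Theorem~\ref{T1}: the central sequence now lives in the non-abelian algebra $L(\Lambda\times\mathbb{Z})$ rather than in $L(\mathbb{Z})$, so to close the commutator inequality one must arrange that the finite set $\mathfrak{I}\subset I$ "detects" every non-identity $g$ in the relevant support, i.e.\ that $g\notin\Lambda_\mathfrak{I}:=\bigcap_{i\in\mathfrak{I}}\Stab(i)$ for those $g$ that carry non-negligible mass. Factoriality of $P^I\rtimes\Lambda$ forces $\bigcap_i\Stab(i)=\{e\}$, and the no-invariant-mean hypothesis (already used inside Lemma~\ref{L10}) together with the asymptotic conjugation-invariance of the Fourier coefficients $c_{g,k}^{(n)}$ coming from centrality with the $v_h$'s is what makes it possible to find such an $\mathfrak{I}$ uniformly. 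This is where I expect the most care to be required, though the pattern is dictated by the earlier proof and by \cite[\S\S2.5]{VaesVerraedt:arXiv:1408.6414}.
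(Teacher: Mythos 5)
Your reductions via Lemma~\ref{L6} and Lemma~\ref{L10} are correct, your observation that $L(\Lambda)\subseteq M_\varphi$ is right, and your (1)~$\Rightarrow$~(2) is fine. But the core step of (2)~$\Rightarrow$~(1) has a genuine gap, and you have in fact flagged it yourself. You try to bound $\|x_n - \tau(x_n)1\|^2 = \sum_{(g,k)\neq(e,0)}|c^{(n)}_{g,k}|^2$ using commutators with elements $y_i\in P^I$, but the inequality you write down only controls the coefficients with $k\neq 0$ or $g\notin\mathrm{Stab}(i)$; the terms $c^{(n)}_{g,0}$ with $g\in\bigcap_{i\in\mathfrak{I}}\mathrm{Stab}(i)\setminus\{e\}$ are untouched for any finite $\mathfrak I$, and there is no reason for a single finite $\mathfrak I$ to work uniformly in $n$. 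The vague appeal to ``asymptotic conjugation-invariance of the Fourier coefficients'' and the no-invariant-mean hypothesis is exactly what would have to be proved, and it is essentially a re-derivation of fullness of $P^I\rtimes\Lambda$ in disguise.

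The paper sidesteps this entirely and your argument should too. The commutator estimate \emph{does} work cleanly if you only ask it to kill the $\mathbb Z$-direction: for a \emph{single} slot $i$, the two cases $g\in\mathrm{Stab}(i)$ and $g\notin\mathrm{Stab}(i)$ give lower bounds $\sum_{y\in\mathfrak F}\|y-\sigma^{\phi}_{kT}(y)\|_\phi^2\geq\varepsilon$ (for $k\neq0$) and $2\sum_{y\in\mathfrak F}\|y\|_\phi^2\geq\varepsilon/2$, respectively, so $\sum_{k\neq0,\,g}|c^{(n)}_{g,k}|^2\leq 2\varepsilon^{-1}\sum_{y\in\mathfrak F}\|[y_i,x_n]\|^2\to0$. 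Thus $x_n$ is asymptotically equal to $y_n:=(\mathrm{Id}\,\bar\otimes\,\mathrm{Id}\,\bar\otimes\,\tau_{\mathbb Z})(x_n)\in L(\Lambda)\subseteq P^I\rtimes\Lambda$. Then $y_n$ is a strongly central sequence in $P^I\rtimes\Lambda$, and the standing hypothesis that $P^I\rtimes\Lambda$ is \emph{full} (which you never actually invoke in this step) forces $y_n$, hence $x_n$, to be trivial. In short: don't try to kill both directions by the same commutator estimate; kill the $\mathbb Z$-direction by commutators and the $\Lambda$-direction by the fullness hypothesis.
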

\begin{proof}
It was shown in \cite[Lemma 2.7]{VaesVerraedt:arXiv:1408.6414} that $t_n \longrightarrow 0$ in $\tau(P^I\rtimes\Lambda)$ if and only if $\sigma^\varphi_{t_n} \longrightarrow \Id$ in $\Aut(P^I)$, which is easily seen to be equivalent to that $\sigma_{t_n}^\phi \longrightarrow \Id$ in $\Aut(P)$. Hence it suffices to prove (2) $\Rightarrow$ (1) as in Theorem \ref{T1}. In fact, the proof of Theorem \ref{T1} (1) $\Rightarrow$ (2) works by replacing $\varphi_c$ there with $\phi^I$. We will explain how to modify the proof of Theorem \ref{T1} (2) $\Rightarrow$ (1).  

We prove its contraposition. Namely, by Lemma \ref{L6} we assume that there exists a non-trivial strongly central sequence $x_n$ in $(P^I\rtimes\Lambda)\rtimes_{\sigma^\varphi_T}\mathbb{Z}$ for some $T > 0$. (See \cite[Definition XIV.3.2]{Takesaki:Book3} for the notion of strongly central sequences.) By Lemma \ref{L10} we may and do assume that all the $x_n$ fall in $(\mathbb{C}1\rtimes\Lambda)\rtimes_{\sigma^\varphi_T}\mathbb{Z} = \mathbf{C}1\botimes L(\Lambda\times\mathbb{Z})$. As in the proof of Theorem \ref{T1} (2) $\Rightarrow$ (1), it suffices to prove that there exists a sequence $k_l$ in $\mathbb{Z}\setminus\{0\}$ such that $\Vert x - \sigma_{k_l T}^\varphi(x)\Vert_\varphi \longrightarrow 0$ as $l\to\infty$ for every $x \in P^I\rtimes\Lambda$. Suppose, on the contrary, that this is not the case. Since $\mathbb{C}1\rtimes\Lambda \subseteq (P^I\rtimes\Lambda)_\varphi$, the fixed-point algebra of the modular action $\sigma^\varphi$, the same argument as in Claim in the proof of Theorem \ref{T1} (2) $\Rightarrow$ (1) actually works for proving that $\Vert x_n - (\Id\botimes \Id\botimes \tau_\mathbb{Z})(x_n)\Vert_2 \longrightarrow 0$ as $n\to\infty$. Set $y_n := (\Id\botimes \Id\botimes \tau_\mathbb{Z})(x_n) \in \mathbb{C}1\rtimes\Lambda \subseteq P^I\rtimes\Lambda$. Since we have assumed that $P^I\rtimes\Lambda$ is full, by \cite[Theorem XIV.3.8]{Takesaki:Book2} the $y_n$ (and hence the $x_n$) must be trivial, a contradiction.  
\end{proof}

So far, we have established, for every full type III$_1$ factor whose $\tau$-invariant is already computed, that the $\tau$-invariant is the usual topology if and only if the continuous core is full. Therefore, one may conjecture that this is true even for any full type III$_1$ factor. Actually, this question seems important from the theoretical point of view, and we are still working on this general question.

\end{document}